\newtheorem{mthm}{Theorem}
\newtheorem{Lemma}[mthm]{Lemma}
\newtheorem{Obs}[mthm]{Remark}
\newtheorem{Prop}[mthm]{Proposition}
\newtheorem{Cor}[mthm]{Corollary}
\newcommand{\op}[1]{{\mathcal O}_{\mathbb{P}^{#1}}}
\newcommand{\p}[1]{{\mathbb{P}^{#1}}}
\newcommand{\tp}[1]{{\rm T}{\mathbb{P}^{#1}}}
\def\P{{\mathbb{P}}}
\def\Z{{\mathbb{Z}}}
\def\sF{{\mathscr{F}}}
\def\sD{{\mathscr{D}}}
\def\sG{{\mathscr{G}}}
\def\sH{{\mathscr{H}}}
\def\O{{\mathcal{O}_{X}}}
\def\OZ{{\mathcal{O}_{Z}}}
\def\IZ{I_{Z}}
\def\OC{{\mathcal{O}_{C}}}
\def\OX{{\mathcal{O}_{X}}}
\newcommand{\inext}{{\mathcal E}{\it xt}}
\DeclareMathOperator{\sing}{Sing}
\DeclareMathOperator{\Hom}{Hom}
\DeclareMathOperator{\Ext}{Ext}
\DeclareMathOperator{\rk}{{rk}}
\DeclareMathOperator{\coker}{{coker}}
\DeclareMathOperator{\img}{{im}}
\DeclareMathOperator{\Pic}{{Pic}}
\title[Distributions and rank 2 reflexive sheaves on threefolds]{Codimension one distributions and stable rank 2 reflexive sheaves on threefolds}
\author[ O. Calvo-Andrade ]{ O. Calvo-Andrade }
\thanks{ }
\dedicatory{}
\address{CIMAT \\ Ap. Postal 402, Guanajuato, 36000, Gto. Mexico}
\email{omegar.mat@gmail.com}
\author[M. Corr\^ea]{M. Corr\^ea}
\thanks{ }
\dedicatory{}
\address{ICEX-UFMG \\ Departamento de Matem\'atica \\
Av. Ant\^onio Carlos, 6627 \\ 31270-901 Belo Horizonte-MG, Brazil}
\email{mauriciojr@ufmg.br}
\author[  M. Jardim]{M. Jardim}
\thanks{ }
\dedicatory{}
\address{IMECC - UNICAMP \\ Departamento de Matem\'atica \\
Rua S\'ergio  Buarque de Holanda, 651\\ 13083-970 Campinas-SP, Brazil}
\email{jardim@ime.unicamp.br}
\keywords{}
\subjclass{}
\date{}
\begin{document}

\maketitle

\begin{abstract}
We show that codimension one distributions with at most isolated singularities on certain smooth projective threefolds with  Picard rank one have stable tangent sheaves. The ideas in the proof of this fact are then applied to the characterization of certain irreducible components of the moduli space of stable rank 2 reflexive sheaves on $\p3$, and to the construction of stable rank 2 reflexive sheaves with prescribed Chern classes on general threefolds. We also prove that if $\sG$ is a  subfoliation of a codimension one distribution $\sF$ with isolated singularities, then $\sing(\sG)$ is a curve. As a consequence,   we give a criterion to decide whether $\sG$ is globally given as the intersection of $\sF$ with another codimension one distribution.
Turning our attention to codimension one distributions with non isolated singularities, we determine the number of connected components of the pure 1-dimensional component of the singular scheme.
\end{abstract}

\section{Introduction}

A \emph{codimension $r$ distribution} $\sF$ on a smooth complex manifold $X$ is given by an exact sequence
\begin{equation}\label{eq:Dist}
\mathscr{F}:\  0  \longrightarrow T_\sF \stackrel{\phi}{ \longrightarrow} TX \stackrel{\pi}{ \longrightarrow} N_{\sF}  \longrightarrow 0,
\end{equation}
where $T_\sF$ is a reflexive sheaf of rank $s:=\dim(X)-r$, and $N_{\sF}$ is a torsion free sheaf; these are respectively called the \emph{tangent} and the \emph{normal} sheaves of $\mathscr{F}$, respectively. We will use the notation $L_{\sF}:=\det(N_{\sF})$. 

Taking the maximal exterior power of the dual morphism $\phi^\vee:\Omega^1_X\to T_\sF^\vee$ we obtain a morphism $\Omega^{s}_X\to \det(T_\sF)^\vee$; its image is an ideal sheaf $I_{Z/X}$ of a subscheme $Z\subset X$, called the \emph{singular scheme} of $\mathscr{F}$, twisted by $\det(T_\sF)^\vee$.

Finally, we say that $\sF$ is integrable, that is a \emph{foliation}, if $ T_\sF$ is closed under the Lie bracket. Clearly, every distribution of codimension $\dim X-1$ is integrable, being given by a single vector field.  For more details about distributions and foliations  see (Araujo $\&$ Corr\^ea,  Corr\^ea et al. 2015a, Corr\^ea Jr et al. 2015, Esteves $\&$ Kleiman 2003).

This paper is dedicated to codimension one distributions on smooth projective threefolds. More precisely, we consider smooth projective threefolds $X$ with Picard group generated by an ample line bundle $\mathcal{O}_X(1)$. In addition, we assume that $H^1(\mathcal{O}_X(t))=0$, for all $t\in \mathbb{Z}$. We set
\begin{itemize}
\item $c_X:=c_1(TX)=-c_1(\Omega_X^1)$,
\item $\rho_X:=\min\{ t\in \Z ~|~  H^0( \Omega_X^1(t))\neq 0\}$. 
\end{itemize}
Examples of projective threefolds satisfying the desired conditions are smooth weighted projective complete intersection threefolds $X$; this claim follows from (Araujo et al. 2018, Lemma 5.17 andCorollary 5.23) and (Flenner 1981, Satz 8.11); furthermore, $\rho_X=2$ and $TX$ is always $\mu$-stable. More generally, projective manifolds with \emph{special cohomology} in the sense of (Peternell $\&$ Wisniewski 1995) also satisfy the desired conditions.

%A pair $(X,\mathcal{O}_X(1))$ consisting of a projective manifold $X$ of dimension $\geq 3$ and an ample line bundle $\O(1)$ over $X$ has special cohomology if the following conditions are satisfied
%\begin{itemize}
%\item{ } for $0<p<\dim(X)$,
%$p+q\ne \dim(X)$ and $t\in   \mathbb{Z}\setminus \{ 0\}$
%$$H^p(X, \Omega_X^q(t))=0,$$
%\item{ } for $0<p<  \dim(X),  p+q \ne  \dim(X) ,  p\ne q$
%$$ H^p(X,\Omega_X^q) =0$$
%\item{} for $0\leq p \leq  \dim(X), 2p \ne  \dim(X) $
%$$ H^p(X,\Omega^p_X ) = \mathbb{C}.$$
%\end{itemize}

%Examples of projective manifold  with ``special cohomology'': Let $X$ be a smooth weighted projective complete intersection   threefold $X$ with rank one Picard group. It follows from  Flenner, cf. \cite[Satz 8.11]{flenner81} that :
%\begin{enumerate}
%\item[1)]  $({\P}^n,\O(1))$, by Bott formulae;
%\item[2)]  smooth weighted complete intersections in weighted projective spaces, by Flenner.  
%\item[3)]  complete intersections  in projective manifold  with special cohomology, by Peternell--Wisniewski \cite{PW} .
%\item[4)]  cyclic coverings of projective manifold  with special cohomology,   by  Peternell--Wisniewski %\cite{PW} .
% \end{enumerate}

Our first two main results concern distributions with only isolated singularities on such threefolds.

\begin{mthm}\label{StGDist}
Let $\sF$ be a codimension one distribution on a smooth projective threefold $X$ with $\Pic(X)=\Z$ and $H^1(\mathcal{O}_X(t))=0$, for all $t\in \mathbb{Z}$. Assume that $\sing(\sF)$ is either empty or has dimension equal to zero. If $c_1(T_\sF)<(\leq)~2\rho_X$, then $T_\sF$ is $\mu$-(semi)-stable. Moreover, if $TX$ is $\mu$-(semi)stable, then $T_\sF$ is $\mu$-(semi)-stable.
\end{mthm}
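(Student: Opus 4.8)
The plan is to exploit the inclusion $T_\sF\hookrightarrow TX$ coming from (\ref{eq:Dist}) together with the reduction of $\mu$-stability for a rank $2$ reflexive sheaf to a statement about line subsheaves. Since $X$ is smooth with $\Pic(X)=\Z$, every rank $1$ reflexive sheaf has the form $\OX(a)$, so a rank $2$ reflexive sheaf $E$ is $\mu$-stable (resp. semistable) precisely when it admits no saturated line subsheaf $\OX(a)\hookrightarrow E$ with $2a\ge c_1(E)$ (resp. $2a>c_1(E)$); equivalently $H^0(E(-a))=0$ for every integer $a\ge\mu(E)=c_1(E)/2$ (resp. $a>\mu(E)$). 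I would apply this to $E=T_\sF$: given a destabilizing $\OX(a)\hookrightarrow T_\sF$, saturate it inside $T_\sF$ so that $\mathcal{Q}:=T_\sF/\OX(a)$ is torsion free of rank $1$, and write $\mathcal{Q}\cong I_W\otimes\OX(b)$ with $b=c_1(T_\sF)-a\le a$ and $\codim W\ge 2$.

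Next I would push the subsheaf into $TX$. Twisting (\ref{eq:Dist}) by $\OX(-a)$ gives $H^0(T_\sF(-a))\hookrightarrow H^0(TX(-a))$, and since $TX/\OX(a)$ is an extension of the torsion free sheaves $N_\sF$ and $\mathcal{Q}$ it is itself torsion free, so $\OX(a)$ is in fact saturated in $TX$. Assuming $TX$ is $\mu$-(semi)stable then forces $a<\mu(TX)=c_X/3$ (resp. $a\le c_X/3$), while the fact that $N_\sF$ is a rank $1$ quotient of $TX$ gives $\mu(N_\sF)=c_1(L_\sF)>c_X/3$ (resp. $\ge$), whence $\mu(T_\sF)=(c_X-c_1(L_\sF))/2<c_X/3=\mu(TX)$. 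Thus all cases with $a\ge c_X/3$ are settled by the stability of $TX$ alone, and the entire content of the statement is concentrated in the \emph{gap} $\mu(T_\sF)\le a<\mu(TX)$.

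The main obstacle is to exclude a destabilizing $\OX(a)$ with slope in this gap, and this is exactly where the hypothesis $\dim\sing(\sF)\le 0$ must enter, since pure slope considerations on $TX$ cannot rule it out. A destabilizing section $v\in H^0(T_\sF(-a))$ is a twisted vector field tangent to $\sF$, i.e. $\iota_v\omega=0$ for the twisted $1$-form $\omega\in H^0(\Omega^1_X\otimes L_\sF)$ defining $\sF$ (equivalently, $\wedge^2 T_\sF\hookrightarrow\wedge^2 TX\cong\Omega^1_X\otimes\det TX$). I would argue that the presence of such a $v$ degenerates the singular scheme: the zero locus of $\omega$ must then acquire a positive dimensional component supported on the degeneracy locus of $v$ together with $W$, contradicting $\dim\sing(\sF)=0$. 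This mirrors the model case $X=\p3$, where $\omega$ corresponds to an antisymmetric matrix $A$, a tangent field of slope in the gap corresponds to a nonzero vector of $\ker A$, and the forced parity $\dim\ker A\in\{0,2,4\}$ makes $\sing(\sF)=\P(\ker A)$ at least a line as soon as it is nonempty. Carrying out this degeneracy argument on a general $X$---most naturally through a Chern class estimate for the rank $2$ reflexive sheaf $T_\sF$ in terms of the length of $\sing(\sF)$, using that $\dim\sing(\sF)=0$ pins down the relevant intersection numbers---is the step I expect to demand the most care. The semistable variants then follow by replacing every strict inequality above by its non-strict counterpart.
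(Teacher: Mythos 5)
Your reduction to line subsheaves, and the observation that $\mu$-(semi)stability of $TX$ disposes of any $\mathcal{O}_X(a)\hookrightarrow T_\sF$ with $a\ge c_X/3$, are both fine. But the heart of the theorem --- excluding a destabilizing line subsheaf with $a$ in your ``gap'', and, for the first assertion, doing so \emph{without} any stability hypothesis on $TX$ (the hypothesis there is only $c_1(T_\sF)<(\le)\,2\rho_X$, so the whole range $a\ge \mu(T_\sF)$ must be handled by this missing step) --- is exactly what you leave unproved, and the mechanism you propose for it does not work. A nonzero $v\in H^0(T_\sF(-a))$ is a twisted vector field tangent to $\sF$ and induces a \emph{subfoliation} $\sG\subset\sF$; what its zero locus controls is the one-dimensional part of $\sing(\sG)$ (this is precisely the content of Proposition \ref{prop-sub} and Theorem \ref{mthm-sub}), not of $\sing(\sF)$. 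Nothing forces $\sing(\sF)$ itself to acquire a positive-dimensional component: the contact distribution $\sD$ on $\p3$ has empty singular scheme yet $h^0(T_\sD)=h^0(N(1))=5$, so tangent twisted vector fields abound without degenerating the defining $1$-form. Your antisymmetric-matrix picture is special to degree $0$ on $\p3$, and the proposed ``Chern class estimate in terms of the length of $\sing(\sF)$'' is not formulated; as written the argument has a genuine hole at its decisive point.

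The paper uses the hypothesis $\dim\sing(\sF)\le 0$ in an entirely different and much more economical way: it gives $\mathcal{E}xt^1(I_Z\otimes L_\sF,\OX)=0$, so dualizing \eqref{dist 3fold} produces a short exact sequence $0\to L_\sF^\vee\to\Omega^1_X\to T_\sF^\vee\to 0$ with $T_\sF^\vee\simeq T_\sF(-f)$, $f=c_1(T_\sF)$, because $T_\sF$ is rank $2$ reflexive. The assumption $H^1(\OX(t))=0$ then makes $H^0(\Omega^1_X(p+f))\to H^0(T_\sF(p))$ surjective for every $p$, so any nonzero section of $T_\sF(p)$ lifts to a nonzero twisted $1$-form and forces $p+f\ge\rho_X$ by the definition of $\rho_X$; combined with $f<(\le)\,2\rho_X$ this yields $p>(\ge)\,-f/2$, which is exactly the cohomological (semi)stability criterion, and the ``moreover'' clause follows since $\mu$-(semi)stability of $TX$ implies $c_1(T_\sF)<(\le)\,2\rho_X$. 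If you want to repair your write-up, this lifting of the destabilizing section to $\Omega^1_X$ is the step to supply in place of the degeneracy argument.
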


Codimension one distributions $\sF$ with the property assumed in the previous statement, namely that $\sing(\sF)$ is either empty or has dimension equal to zero, are called \emph{generic}, because they can be defined by a general 1-form $\omega\in H^0(\Omega^1_X(d))$ for some $d$. If $X$ is a projective variety with cyclic Picard group, then it follows from (Brunella $\&$ Perrone  2011)  that an integrable codimension one distribution always has non isolated singular points. In particular, generic codimension one distributions on projective threefolds with rank one Picard group cannot be integrable. 

Moreover, we recall that if $\sing(\sF)$ is empty, then it is non-integrable and $(X,\sF)\simeq (\p3, \sD) $, where $\sD$ is the contact distribution on $\p3$, whose tangent bundle is the null correlation bundle twisted by $\mathcal{O}_{\p3}(1)$, see  (Ye 1984). 

Next, we consider subfoliations of generic codimension one distributions. We   prove that if $\sG$ is a subfoliation of a distribution $\sF$ with isolated singularities, then $\sG$ has non isolated singularities. As a consequence, we give a criterion to decide when $\sG$ is globally given as a complete intersection of $\sF$ with another codimension one distribution.

%$\sF\cap \sH$, where $\sH$ is a codimension one distribution such that $\det(T\sH)=\det(T\sF/T\sG)$. 

We must fix some notation in order to state the result mentioned above. When the singular scheme $Z$ of a distribution $\sF$ is 1-dimensional, we let $\mathscr{U}$ denote the maximal 0-dimensional subsheaf of the structure sheaf $\OZ$; the quotient sheaf $\OZ/\mathscr{U}$ is the structure sheaf of a subscheme $C\subset X$ of pure dimension 1. This scheme will be denoted by $\sing_1(\sF)$, and contains all non isolated singularities of $\sF$; the support of the sheaf $\mathscr{U}$ is denoted $\sing_0(\sF)$, and contains all isolated singularities of $\sF$.

\begin{mthm}\label{mthm-sub}
Let $\sF$ be a generic codimension one distribution on a threefold $X$, and  consider a section $\sigma\in H^0(T_\sF\otimes\mathscr{L}^{\vee})$ whose zero locus is a curve $Y$ in $X$, with $\mathscr{L}\in\Pic(X)$. If $\sG$ is the sub-foliation of $\sF$ induced by $\sigma$, then $Y=\sing_1(\sG)$ and $\sing_0(\sG)=\emptyset$. If, in addition, $H^1(\mathscr{L}\otimes\det\Omega^1_X\otimes\det(T_\sF)^2)=0$, then $\sG$  is  given by the intersection of $\sF$ with another codimension one distribution $\sH$ satisfying $\det(N_{\sH})= \det(T_\sF/T_\sG)$. 
\end{mthm}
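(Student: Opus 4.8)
The plan is to translate everything into the chain of inclusions $T_\sG\subset T_\sF\subset TX$ and to deduce both assertions from the single vanishing $\inext^1(N_\sF,\mathcal{O}_X)=0$. This holds because $\sF$ is generic, i.e. $Z:=\sing(\sF)$ is zero-dimensional, so $\mathcal{O}_Z$ is Cohen--Macaulay of codimension $3$ and $\inext^1(N_\sF,\mathcal{O}_X)\cong\inext^2(\mathcal{O}_Z,\,\cdot\,)=0$. First I fix notation. The section $\sigma\in H^0(T_\sF\otimes\mathscr{L}^\vee)=\Hom(\mathscr{L},T_\sF)$ is a sheaf injection $\sigma:\mathscr{L}\to T_\sF$ whose zero scheme is the curve $Y$; since $Y$ has codimension two, the saturation of the image of $\sigma$ in $T_\sF$ is $\mathscr{L}$, so $T_\sG=\mathscr{L}$. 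Writing $Q:=T_\sF/T_\sG$, we obtain a torsion-free rank one quotient with $Q^{\vee\vee}=\det(T_\sF)\otimes\mathscr{L}^\vee=:M$ and $Q=M\otimes I_Y$, so that $M=\det(T_\sF/T_\sG)$.

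For the first assertion, by definition $\sing(\sG)$ is cut out by the image of $\phi_\sG^\vee:\Omega^1_X\to\mathscr{L}^\vee$, which is the dual $s^\vee$ of the composite $s=\iota\circ\sigma:\mathscr{L}\xrightarrow{\sigma}T_\sF\xrightarrow{\iota}TX$. Dualizing the defining sequence \eqref{eq:Dist} of $\sF$ and using $\inext^1(N_\sF,\mathcal{O}_X)=0$ shows that $\iota^\vee:\Omega^1_X\to T_\sF^\vee$ is surjective. Hence $\img(s^\vee)=\img(\sigma^\vee\circ\iota^\vee)=\img(\sigma^\vee)$, and the latter is exactly $I_Y\otimes\mathscr{L}^\vee$ because $Y$ is the zero scheme of $\sigma$. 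Therefore $\sing(\sG)=Y$ as schemes; being a curve it is purely one-dimensional, whence $\sing_1(\sG)=Y$ and $\sing_0(\sG)=\emptyset$.

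For the second assertion, a codimension one distribution $\sH$ with $L_\sH=M$ is the same as a nonzero map $\eta:TX\to M$, i.e. $\eta\in H^0(\Omega^1_X\otimes M)=\Hom(TX,M)$, and $\sG=\sF\cap\sH$ exactly when $T_\sF\cap T_\sH=\ker(\eta|_{T_\sF})$ equals $T_\sG=\mathscr{L}$. There is a canonical candidate for the restriction: the composite $\bar\eta:T_\sF\twoheadrightarrow Q\hookrightarrow Q^{\vee\vee}=M$, which lies in $\Hom(T_\sF,M)$ and satisfies $\ker\bar\eta=\mathscr{L}$. So it suffices to lift $\bar\eta$ along the restriction map $\Hom(TX,M)\to\Hom(T_\sF,M)$. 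Applying $\mathcal{H}om(-,M)$ to \eqref{eq:Dist} and passing to cohomology, the obstruction to lifting lies in $\Ext^1(N_\sF,M)$; the local-to-global spectral sequence together with $\inext^1(N_\sF,M)=0$ (again from $\dim Z=0$) gives $\Ext^1(N_\sF,M)\cong H^1(\mathcal{H}om(N_\sF,M))$. Since $Z$ has codimension at least two, $\mathcal{H}om(N_\sF,M)=\mathcal{H}om(L_\sF\otimes I_Z,M)=M\otimes L_\sF^\vee$, and a determinant computation (using $\det(T_\sF)\otimes L_\sF=\det TX$) identifies this line bundle with the one appearing in the hypothesis. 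Its first cohomology vanishes by assumption, so $\bar\eta$ lifts to some $\eta$; taking $\sH:=\ker(\eta:TX\to M)$ produces the required distribution, with $T_\sF\cap T_\sH=\ker\bar\eta=T_\sG$ and $L_\sH=M=\det(T_\sF/T_\sG)$.

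The main obstacle is precisely this lifting step: one must extend the tautological homomorphism $\bar\eta$ from $T_\sF$ to all of $TX$. The two ingredients that make it work are the \emph{local} obstruction freedom $\inext^1(N_\sF,M)=0$, where the genericity of $\sF$ (isolated singularities, hence $\codim Z=3$) is essential, and the \emph{global} vanishing $H^1(\mathcal{H}om(N_\sF,M))=0$, which is exactly the cohomological hypothesis in the statement. I would also check that the lift $\eta$ is genuinely nonzero, so that $\sH$ is an honest codimension one distribution; this is automatic since $\eta|_{T_\sF}=\bar\eta\neq0$. Finally, that the intersection $T_\sF\cap T_\sH$ equals $T_\sG$ rather than a strictly larger subsheaf follows from $\ker\bar\eta=\mathscr{L}$ already being saturated in $T_\sF$.
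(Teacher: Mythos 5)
Your argument is correct, and it reaches both conclusions of the theorem by a route that is dual to, but noticeably leaner than, the paper's. For the first claim, the paper specializes its general four-sequence analysis (the sequences \eqref{i}--\eqref{iv} built from the connecting map $\xi$) to the case $\sing_1(\sF)=\emptyset$, deducing $V=0$, $S\simeq\mathcal{O}_W\otimes\mathscr{L}^\vee$, and then $W=Y$ from $\dim\inext^2(I_Z,\OX)=0$; you instead use the surjectivity of $\Omega^1_X\to T_\sF^\vee$ (the same consequence of $\dim Z\le 0$ that the paper exploits in display \eqref{S2}) to identify $\img(\phi_\sG^\vee)$ directly with $\img(\sigma^\vee)=I_Y\otimes\mathscr{L}^\vee$. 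This bypasses the whole $\xi$-machinery, at the small cost of implicitly invoking the fact that the zero scheme of a section of a rank $2$ reflexive sheaf vanishing in codimension two is a locally Cohen--Macaulay curve (Hartshorne 1980, Theorem 4.1) in order to pass from $\sing(\sG)=Y$ to $\sing_0(\sG)=\emptyset$; you should make that citation explicit. For the second claim, the paper splits the conormal extension $0\to L_\sF^\vee\to N_\sG^\vee\to L_{\sF/\sG}^\vee\to 0$, whose class lies in $\Ext^1(L_{\sF/\sG}^\vee,L_\sF^\vee)=H^1(L_{\sF/\sG}\otimes L_\sF^\vee)$, whereas you lift $\bar\eta:T_\sF\to M$ across $TX$ with obstruction in $\Ext^1(N_\sF,M)\cong H^1(M\otimes L_\sF^\vee)$; since $M=L_{\sF/\sG}$, the obstruction group is literally the same, so the two arguments are equivalent, and yours has the minor advantage of exhibiting $\sH$ explicitly as $\ker\eta$ rather than extracting it from a direct-sum decomposition of $N_\sG^\vee$ inside $\Omega^1_X$. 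One point worth flagging, which affects the paper's own proof equally: the line bundle that actually needs vanishing $H^1$ in both arguments is $\mathscr{L}^\vee\otimes\det\Omega^1_X\otimes\det(T_\sF)^2$, while the theorem as stated writes $\mathscr{L}$ in place of $\mathscr{L}^\vee$; this appears to be a typo in the statement rather than a gap in either proof.
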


Tangent sheaves of generic codimension one distributions on $X$ can alternatively be described as quotients of $\Omega^1_X$. It turns out that this is an important class of rank 2 sheaves, providing examples of sheaves with interesting properties in various contexts.

%In \cite{CCJ} we have described the moduli space of distributions in terms of Grothendieck's Quot-scheme for the tangent bundle. In certain cases, we show that the moduli space of codimension one distributions on the projective space is an irreducible, nonsingular quasi-projective variety. More precisely, let $\mathcal{D}^{P,ss}$ be the moduli space of distributions on a projective manifold whose tangent sheaves are  semistable sheaves  with fixed Hilbert polynomial $P$.  In order to describe the moduli space of distributions, in \cite[Section 2.2]{CCJ}   we have considered the natural  forgetful morphism
%$$ \varpi :  \mathcal{D}^{P,ss} \to M^P ~~,~~ \varpi([\sF]) := [T_\sF], $$
%where $M^P$ is the moduli space of reflexive sheaves Hilbert polynomial $P$. In this work we  will study how the geometry of $ \mathcal{D}^{P,ss}$ can be useful in understanding  of the moduli space $M^P$ of reflexive sheaves. 

We provide two applications of our ideas to the construction of interesting rank 2 reflexive sheaves on threefolds. We first focus on the case $X=\p3$, establishing the following result. Recall that the degree of a codimension one distribution on $\p3$ is the integer $d:=2-c_1(T_\sF)$.

\begin{mthm}\label{main2}
For each $d\ge0$, $d\ne2$, the moduli space of stable rank 2 reflexive sheaves on $\p3$ with Chern classes
$$ (c_1,c_2,c_3)=(2-d,d^2+2,d^3+2d^2+2d) $$
contains a nonsingular, rational, irreducible component of dimension $(d+1)(d+3)(d+4)/2-1$ whose generic point is the tangent sheaf of a generic distribution of degree $d$ on $\p3$.  
\end{mthm}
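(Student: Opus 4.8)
The plan is to realize the tangent sheaves of generic degree $d$ distributions as a locally closed family inside the moduli space, to compute its dimension through the deformation theory of $T_\sF$, and to show that the family fills out a smooth component birational to a projective space.

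First I would record the defining sequence of a generic distribution $\sF$ of degree $d$, namely
$$0 \longrightarrow T_\sF \longrightarrow \tp3 \stackrel{\omega}{\longrightarrow} I_{Z}(d+2) \longrightarrow 0,$$
where $\omega\in H^0(\Om(d+2))$ and $Z=\sing(\sF)$ is a zero-dimensional scheme. A direct Chern class computation from this sequence gives $c_1(T_\sF)=2-d$ and $c_2(T_\sF)=d^2+2$; matching the degree-three term forces the length of $Z$ to equal $c_3(\Om(d+2))=d^3+2d^2+2d$, which is simultaneously $c_3(T_\sF)$, so the Chern classes are exactly those in the statement. Since $\rho_{\p3}=2$ and $c_1(T_\sF)=2-d<4=2\rho_{\p3}$ for all $d\ge 0$, and since $\tp3$ is $\mu$-stable, Theorem \ref{StGDist} shows that $T_\sF$ is $\mu$-stable; being the kernel of a morphism from a bundle to a torsion-free sheaf, it is reflexive. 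Hence $[T_\sF]$ is a point of the moduli space $\mathcal M$ of stable rank $2$ reflexive sheaves with these Chern classes.

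Next I would analyze the deformation theory at $[T_\sF]$. Stability gives $\operatorname{ext}^0(T_\sF,T_\sF)=1$, while Serre duality and stability give $\Ext^3(T_\sF,T_\sF)\cong\Hom(T_\sF,T_\sF(-4))^\vee=0$; combined with Hirzebruch--Riemann--Roch, the identity $\chi(T_\sF,T_\sF)=1-\operatorname{ext}^1+\operatorname{ext}^2$ reduces the whole dimension count to a single vanishing. The decisive step, and the one I expect to be the main obstacle, is to prove that the obstruction space $\Ext^2(T_\sF,T_\sF)$ vanishes. I would attack this by applying $\Hom(-,T_\sF)$ and $\Hom(\tp3,-)$ to the defining sequence, expressing $\Ext^2(T_\sF,T_\sF)$ in terms of groups such as $H^j(T_\sF(-d-2))$, $\Ext^i(I_Z(d+2),T_\sF)$ and the local contributions of the $\inext$-sheaves supported on $Z$; the genericity of $\omega$ (isolated singularities) together with Bott-type vanishing for $\Om(k)$ on $\p3$ should kill all the relevant terms, the exceptional behaviour at $d=2$ being precisely where one of these vanishings fails. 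Granting $\Ext^2(T_\sF,T_\sF)=0$, the space $\mathcal M$ is smooth at $[T_\sF]$ of dimension $\operatorname{ext}^1(T_\sF,T_\sF)$, and evaluating $\chi$ gives $\operatorname{ext}^1=(d+1)(d+3)(d+4)/2-1$.

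Finally I would match this number with the dimension of the parameter space and conclude. The Euler sequence yields $h^0(\Om(d+2))=4\binom{d+4}{3}-\binom{d+5}{3}=(d+1)(d+3)(d+4)/2$, so the generic distributions of degree $d$ are parametrized by a dense open subset $U\subset\P H^0(\Om(d+2))$ of dimension $(d+1)(d+3)(d+4)/2-1$. Dualizing the defining sequence exhibits $T_\sF^\vee$ as $\coker\bigl(\omega\colon\op3(-d-2)\to\Om\bigr)$, and since $\Om$ is stable one has $\operatorname{End}(\Om)=\C$; lifting an isomorphism of two such quotients to a scalar endomorphism of $\Om$ then forces the classifying morphism $\Phi\colon U\to\mathcal M$, $[\omega]\mapsto[T_\sF]$, to be injective. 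As $\dim U$ equals the local dimension of $\mathcal M$ at every point of $\Phi(U)$, the image is open in an irreducible component, so its closure $\overline{\Phi(U)}$ is an irreducible component of $\mathcal M$; it is generically smooth by the vanishing above, has the asserted dimension, and is rational because $\Phi$ is a birational injective map out of the rational variety $\P H^0(\Om(d+2))$.
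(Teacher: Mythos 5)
Your overall architecture (realize $T_\sF$ via the sequence $0\to T_\sF\to\tp3\to I_Z(d+2)\to0$, apply Theorem \ref{StGDist} for stability, parametrize by $\P H^0(\Om(d+2))$, and settle the dimension by deformation theory) matches the paper's, but the step you yourself flag as decisive is false: $\Ext^2(F,F)$ does \emph{not} vanish for $d\ge4$. The paper computes its exact dimension, $\dim\Ext^2(F,F)=\tfrac{1}{2}d(d-1)(d-3)$, which is zero only for $d\in\{0,1,3\}$. You can see that your claimed vanishing cannot hold from your own numbers: if $\Ext^2(F,F)=0$, then the Riemann--Roch identity $\dim\Ext^1-\dim\Ext^2=8c_2-2c_1^2-3=6d^2+8d+5$ would force $\operatorname{ext}^1(F,F)=6d^2+8d+5$, whereas the family you construct has dimension $h^0(\Om(d+2))-1=(d+1)(d+3)(d+4)/2-1=6d^2+8d+5+\tfrac{1}{2}d(d-1)(d-3)$, which is strictly larger for $d\ge4$. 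An injective family of dimension exceeding $\operatorname{ext}^1$ is impossible, so either the vanishing or your evaluation ``$\chi$ gives $\operatorname{ext}^1=(d+1)(d+3)(d+4)/2-1$'' must fail; in fact both do for $d\ge4$. The exceptional value $d=2$ is also not where ``one of these vanishings fails'' in your sense: for $d=2$ one actually has $\Ext^2(F,F)=0$, but the family has dimension $44$ inside a $45$-dimensional component, so the sheaves in question are not generic points there.

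The correct route, which the paper follows, is to compute $\dim\Ext^2(F,F)$ exactly rather than kill it: applying $\Hom(-,F)$ to the defining sequence gives $\Ext^2(F,F)\simeq H^2(F\otimes\tp3(d-2))$, and a cohomology chase through the Euler sequence together with the vanishings of Lemma \ref{cohomology} yields $\dim\Ext^2(F,F)=4h^0(\op3(d-3))-h^0(\op3(d-2))=\tfrac{1}{2}d(d-1)(d-3)$ for $d\ne2$. Combined with Riemann--Roch this gives $\operatorname{ext}^1(F,F)=h^0(\Om(d+2))-1$, i.e.\ the tangent space to the moduli space at $[F]$ has exactly the dimension of the family; smoothness and the identification of the component then follow because the family already fills out the entire tangent space, with no appeal to unobstructedness via $\Ext^2=0$. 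Your final paragraph (injectivity of the classifying map via $\operatorname{End}(\Om)=\C$, openness of the image, rationality) is fine and essentially what the paper does, but it only works once the equality $\operatorname{ext}^1(F,F)=\dim U$ is established by the exact computation above.
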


The case $d=0$ is well known, since the tangent sheaf of a generic distribution of degree 0 on $\p3$ is precisely $N(1)$, where $N$ is a null correlation bundle, and these are parametrized by an open subset of $\p5$. The case $d=1$ was considered by  (Chang 1984, Theorem 3.14), who showed that the component described in Theorem \ref{main2} is the whole moduli space of stable rank 2 reflexive sheaves with Chern classes $(c_1,c_2,c_3)=(-1,3,5)$; see also (Calvo-Andrade et al. 2018, Theorem 8.1).

\bigskip

For general threefolds, the existence of $\mu$-stable reflexive sheaves with prescribed Chern classes is an open problem with particular interest to String Theory when $X$ is a Calabi--Yau threefold; see Section \ref{cy3f} for more details. In this context, we prove the following  existence and uniqueness result for rank two reflexive sheaves; set
$$ \gamma_X := \min \{ t\in\Z ~|~ \Omega^1_X(t) \textrm{ is globally generated} \}. $$

\begin{mthm}\label{existencia}
Let $X$ be a  smooth projective $3$-fold with rank one Picard group and $c_X<3\rho_X$. Then for every integer $r \geq \gamma_X$, there exists a rank two reflexive sheaf $E$ satisfying:
\begin{itemize}
\item $c_1(E)=c_1( TX)-rc_1(\mathcal{O}_X(1))$,
\item $c_2(E)=c_2( TX)-rc_1(\mathcal{O}_X(1))\cdot c_1( TX)+r^2c_1(\mathcal{O}_X(1))^2$,
\item $c_3(E)=-c_3( TX(-r) ).$
\end{itemize}
Moreover, if $TX$ is simple, then $E$ is uniquely determined by its singular  scheme $\sing(E)$ when $r$ is sufficiently large.
\end{mthm}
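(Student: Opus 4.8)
The plan is to realize $E$ as the tangent sheaf of a generic codimension one distribution defined by a twisted $1$-form, and then to read off its invariants from the defining exact sequence. Concretely, I would set $L_\sF=\mathcal{O}_X(r)$ and choose $\omega\in H^0(\Omega^1_X(r))$; viewing $\omega$ as a morphism $TX\to\mathcal{O}_X(r)$, I set $E:=\ker\omega$. Since $r\ge\gamma_X$, the bundle $\Omega^1_X(r)$ is globally generated, so a general $\omega$ has zero scheme $Z$ of the expected codimension three, i.e. $\dim Z=0$; thus $\sF=\{\omega=0\}$ is generic and $Z=\sing(\sF)$. Being the kernel of a morphism from a locally free sheaf to the torsion free image $I_{Z/X}(r)=\img\omega$, the sheaf $E$ is reflexive of rank two and fits in
\begin{equation*}
0\longrightarrow E\longrightarrow TX\stackrel{\omega}{\longrightarrow} I_{Z/X}(r)\longrightarrow0 .
\end{equation*}
The numerical hypothesis $c_X<3\rho_X$ is what guarantees that the top Chern class $c_3(\Omega^1_X(r))=-c_3(TX(-r))$ is positive, so that $Z$ is nonempty and $E$ is genuinely non locally free.

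Reading the Chern classes off this sequence is then routine: from $c(TX)=c(E)\cdot c(I_{Z/X}(r))$ one gets $c_1(E)=c_1(TX)-rc_1(\mathcal{O}_X(1))$ and the stated value of $c_2(E)$, while $c_3(E)$ equals the length $\ell(Z)$ of the singular scheme. Since $Z$ is the zero scheme of a regular section of the rank three bundle $\Omega^1_X(r)$, one has $\ell(Z)=\int_X c_3(\Omega^1_X(r))=-c_3(TX(-r))$, which is exactly the prescribed value. This settles existence for every $r\ge\gamma_X$.

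For the uniqueness statement I would argue as follows. First, for $r$ large one has $c_1(E)=c_X-r<2\rho_X$, so Theorem \ref{StGDist} applies and $E$ is $\mu$-stable; stability is what lets me upgrade a nonzero morphism between two candidates into an isomorphism. The heart of the matter is to show that $Z=\sing(E)$ determines the defining form up to a scalar, i.e. that
\begin{equation*}
h^0\!\left(\Omega^1_X(r)\otimes I_{Z/X}\right)=1 .
\end{equation*}
Granting this, any rank two reflexive sheaf $E'$ with the same invariants and $\sing(E')=Z$ that embeds in $TX$ with cokernel $I_{Z/X}(r)$ is defined by a form $\omega'$ vanishing on $Z$, hence $\omega'$ is proportional to $\omega$ and $E'\cong E$; the remaining point, that every such $E'$ does embed in $TX$ as a distribution, I would obtain from $\mu$-stability of $E'$ together with $\Hom(E',TX)\ne0$ (automatic for $r\gg0$), the Chern classes then forcing the cokernel to be $I_{Z/X}(r)$.

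To prove the displayed equality I would tensor the Koszul resolution of $I_{Z/X}$ determined by the regular section $\omega$ by $\Omega^1_X(r)$ and chase cohomology. The simplicity of $TX$ enters twice: it gives $\Hom(\Omega^1_X,\Omega^1_X)=\C$, so that the only endomorphism of the middle term $\mathcal{E}nd(\Omega^1_X)$ is the identity, whose image in $H^0(\Omega^1_X(r)\otimes I_{Z/X})$ is the line $\C\cdot\omega$; and it gives $\Hom(TX,E)=H^0(E\otimes\Omega^1_X)=0$, which makes this line inject. The only possible extra sections come from $H^1(E\otimes\Omega^1_X)=\Ext^1(TX,E)$, and by Serre duality this is dual to an $H^2$ of an increasingly positive sheaf, which vanishes once $r$ is sufficiently large. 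I expect this last vanishing---ruling out \emph{extra} $1$-forms vanishing on $Z$ beyond the multiples of $\omega$---to be the main obstacle, both because it is where the hypothesis $r\gg0$ is genuinely needed and because $E$ itself varies with $r$, so the vanishing must be made uniform rather than quoted from a single application of Serre vanishing.
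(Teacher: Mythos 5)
Your existence argument is the paper's: a general $\omega\in H^0(\Omega^1_X(r))$ (the paper invokes Ottaviani's Bertini-type theorem, you invoke global generation of $\Omega^1_X(r)$ --- same mechanism) has degeneracy locus of dimension $\le 0$, and $E:=\ker\omega$ is the reflexive rank two tangent sheaf of a generic distribution, with Chern classes read off the sequence $0\to E\to TX\to I_{Z/X}(r)\to 0$ exactly as you do. One correction of emphasis: the hypothesis $c_X<3\rho_X$ is not there to make $Z$ nonempty (the statement does not require $Z\ne\emptyset$, and your positivity claim for $c_3(\Omega^1_X(r))$ at $r=\gamma_X$ is not justified); in the paper it is used, together with $r\ge\gamma_X\ge\rho_X$, to get $c_1(E)=c_X-r<2\rho_X$ and hence $\mu$-stability of $E$ via Theorem \ref{StGDist} for \emph{every} $r\ge\gamma_X$, which is what makes the theorem feed into the stable spectrum $\widetilde{S_r}(X)$ discussed just before the proof. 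You only observe stability ``for $r$ large,'' which undersells what the hypothesis gives.

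For uniqueness the paper simply cites (Araujo \& Corr\^ea 2013, Theorem 1.1): under the vanishings $H^i(\Omega^1_X\otimes\wedge^{i+1}TX(-ir))=0$ for $i=1,2$ and simplicity of $TX$, any $\omega'$ with $\{\omega'=0\}\subset\{\omega=0\}$ is a scalar multiple of $\omega$, so $\ker\omega'=\ker\omega$. Your plan --- compute $h^0(\Omega^1_X(r)\otimes I_{Z/X})=1$ from the Koszul resolution of $I_{Z/X}$ twisted by $\Omega^1_X(r)$, using $\Hom(\Omega^1_X,\Omega^1_X)=\C$ --- is precisely the computation behind that citation, so the core mechanism matches. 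Two remarks. First, the uniformity worry you raise at the end dissolves if you stick to the Koszul route: the relevant groups are $H^1(\Omega^1_X\otimes\wedge^2TX(-r))$ and $H^2(\Omega^1_X\otimes\wedge^3TX(-2r))$, cohomology of \emph{fixed} bundles with increasingly negative twists, which vanish for $r\gg0$ by Serre duality plus Serre vanishing; there is no need to control $H^1(E\otimes\Omega^1_X)$ with $E$ varying. Second, you are attempting a strictly stronger statement than the paper proves: the paper's uniqueness is within the class of kernels of morphisms $TX\to\OX(r)$, whereas you want uniqueness among all reflexive sheaves with the given invariants and singular scheme, and the step where an arbitrary such $E'$ is shown to embed in $TX$ with cokernel $I_{Z/X}(r)$ ($\Hom(E',TX)\ne0$ ``automatic for $r\gg0$'' while $E'$ itself varies with $r$) is a genuine gap in your sketch --- but it is not needed for the statement as claimed.
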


Observe that in the Calabi--Yau case we will have
\begin{itemize}
\item $c_1(E)=-rc_1(\mathcal{O}_X(1))$,
\item $c_2(E)=c_2(TX) +r^2c_1(\mathcal{O}_X(1))^2$,
\item $c_3(E)=-c_3(TX)+rc_2(TX)\cdot  c_1(\mathcal{O}_X(1))+r^3 c_1(\mathcal{O}_X(1))^3.$
\end{itemize}

\bigskip

Finally, since foliations always have non isolated singularities, it is therefore also important to study distributions with non isolated singularities. One relevant problem concerning non isolated singularities was formulated by Cerveau in  (Cerveau 2013): if $\sF$ is a codimension one foliation on $\P^3$, then is $\sing_1(\sF)$ connected? We showed in   (Calvo-Andrade et al. 2018) that this question has a negative answer for non-integrable distributions, giving an explicit example of a codimension one distribution on $\p3$ with locally free tangent sheaf whose singular scheme is not connected, see (Calvo-Andrade et al. 2018,  Theorem 9.5 item 2 (b)), thus showing that the connectedness of $\sing_1(\sF)$ must somehow be tied with the integrability of $\sF$.

Our last result shows how to count the number of connected components of $\sing_1(\sF)$ in terms of topological invariants of the tangent sheaf, under certain mild conditions.

\begin{mthm}\label{mthm-conn}
Let $\sF$ be a codimension one distribution on a smooth projective threefold $X$ satisfying $h^1(\OX)=0$. If $h^1(TX\otimes L_\sF^\vee)=h^2(TX\otimes L_\sF^\vee)=0$ and $\sing_1(\sF)$ is reduced, then $\sing_1(\sF)$ has
$$ h^2(T_\sF\otimes L_\sF^\vee) - c_3(T_\sF) + 1$$
connected components. In particular, $\sing_1(\sF)$ is connected if and only if $h^2(T_\sF\otimes L_\sF^\vee) = c_3(T_\sF)$.
\end{mthm}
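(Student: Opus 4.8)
The plan is to realize the number of connected components of the reduced curve $C:=\sing_1(\sF)$ as $h^0(\OC)$ and to compute this through the cohomology of ideal sheaves. Since $C$ is projective and reduced, each of its connected components is a reduced connected projective scheme over $\C$ and therefore has only constant global functions, so $h^0(\OC)$ equals the number of connected components. Feeding the structure sequence $0\to I_{C/X}\to\OX\to\OC\to 0$ into cohomology and using $h^0(\OX)=1$, $h^1(\OX)=0$ together with $h^0(I_{C/X})=0$ (here one assumes $C\neq\emptyset$, the empty case being a direct check against the stated formula), one gets $h^0(\OC)=1+h^1(I_{C/X})$. Thus the entire statement reduces to proving
\[
h^1(I_{C/X})=h^2(T_\sF\otimes L_\sF^\vee)-c_3(T_\sF).
\]

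First I would control the full singular scheme $Z=\sing(\sF)$. Since $\sF$ has codimension one, the construction of its singular scheme identifies the normal sheaf as $N_\sF\cong I_{Z/X}\otimes L_\sF$, so twisting \eqref{eq:Dist} by $L_\sF^\vee$ gives
\[
0\longrightarrow T_\sF\otimes L_\sF^\vee\longrightarrow TX\otimes L_\sF^\vee\longrightarrow I_{Z/X}\longrightarrow 0.
\]
Passing to the long exact cohomology sequence and invoking the hypotheses $h^1(TX\otimes L_\sF^\vee)=h^2(TX\otimes L_\sF^\vee)=0$ yields $h^1(I_{Z/X})=h^2(T_\sF\otimes L_\sF^\vee)$. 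To descend from $Z$ to $C$, I use the decomposition of the singular scheme: with $\mathscr{U}$ the maximal $0$-dimensional subsheaf of $\OZ$, there is a short exact sequence $0\to I_{Z/X}\to I_{C/X}\to\mathscr{U}\to 0$. As $\mathscr{U}$ has finite length, $H^1(\mathscr{U})=0$, and the associated cohomology sequence gives $h^1(I_{C/X})=h^1(I_{Z/X})-h^0(\mathscr{U})=h^2(T_\sF\otimes L_\sF^\vee)-\ell$, where $\ell:=\mathrm{length}(\mathscr{U})$.

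It remains to identify $\ell$ with $c_3(T_\sF)$, and this is the crux of the argument and the point at which reducedness of $C$ is decisive. The resolution $0\to T_\sF\to TX\to I_{Z/X}\otimes L_\sF\to 0$ presents $T_\sF$ as a syzygy: applying $\mathcal{H}om(-,\OX)$ and using that $TX$ and $\OX$ are locally free, together with the structure sequence $0\to I_{Z/X}\to\OX\to\OZ\to 0$, produces natural isomorphisms $\inext^1(T_\sF,\OX)\cong\inext^3(\OZ,\OX)\otimes L_\sF^\vee$. The decomposition $0\to\mathscr{U}\to\OZ\to\OC\to 0$ then reduces $\inext^3(\OZ,\OX)$ to its two pieces. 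The hypothesis that $C$ is reduced forces it to be Cohen--Macaulay, since a reduced scheme of pure dimension one has no embedded points; hence $C$ is a Cohen--Macaulay curve of codimension two and $\inext^q(\OC,\OX)=0$ for $q\neq 2$, in particular $\inext^3(\OC,\OX)=0$. Therefore $\inext^3(\OZ,\OX)\cong\inext^3(\mathscr{U},\OX)$, whose length equals $\ell$ by local duality for the finite-length sheaf $\mathscr{U}$. Combining this with Hartshorne's formula $c_3(T_\sF)=\mathrm{length}\,\inext^1(T_\sF,\OX)$ for rank two reflexive sheaves on a smooth threefold yields $c_3(T_\sF)=\ell$, and substituting into the previous step gives the desired identity and hence the count $h^2(T_\sF\otimes L_\sF^\vee)-c_3(T_\sF)+1$; the ``in particular'' clause is then immediate.

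The main obstacle is this last step, the identification $\ell=c_3(T_\sF)$: one must set up the syzygy comparison of $\inext$-sheaves correctly and, above all, exploit reducedness to obtain Cohen--Macaulayness of $C$ so that the one-dimensional component contributes nothing to $\inext^3(\OZ,\OX)$. Were $C$ allowed to carry embedded points or to be non-reduced, the curve would in general contribute to $\inext^3$ and the clean equality $c_3(T_\sF)=\ell$ would break down, which is exactly why the reducedness hypothesis cannot be omitted. Once the two short exact sequences and the cohomological vanishing hypotheses are in hand, the remaining bookkeeping is routine.
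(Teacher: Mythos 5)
Your proof is correct and takes essentially the same route as the paper's proof of Proposition \ref{prop-conn}: twist the distribution sequence by $L_\sF^\vee$ to identify $h^1(I_{Z/X})$ with $h^2(T_\sF\otimes L_\sF^\vee)$, dualize to get $\inext^1(T_\sF,\OX)\simeq\inext^3(\mathscr{U},\OX)$ and hence $h^0(\mathscr{U})=c_3(T_\sF)$ via Hartshorne's formula, and subtract; routing the count through $h^1(I_{C/X})$ rather than through $h^0(\OZ)-h^0(\mathscr{U})$ is only a cosmetic reordering. One small correction of emphasis: $C$ is Cohen--Macaulay by construction (it is defined as the pure $1$-dimensional part, with $\mathscr{U}$ the \emph{maximal} $0$-dimensional subsheaf of $\OZ$, so $\OC$ has no embedded points regardless of reducedness), and the reducedness hypothesis is genuinely needed only to identify $h^0(\OC)$ with the number of connected components.
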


%\bigskip

The results listed above are subsequently proved in the five sections that follow.

%%%%%%%%%%%%%%%%%%%%%%%%%%%%%%%%%%%%%%%%%%%%%%%%%%%%%%%%%%%%%%%%%%%%%%%%%%%%
%%%%%%%%%%%%%%%%%%%%%%%%%%%%%%%%%%%%%%%%%%%%%%%%%%%%%%%%%%%%%%%%%%%%%%%%%%%%

\section{Generic distributions on threefolds}

We begin by setting up the notation and nomenclature to be used in the rest of the paper. This section is then completed with the proof of Theorem \ref{StGDist}.

We consider in this paper only codimension 1 distributions on a smooth projective variety $X$ of dimension 3, so that the exact sequence in display \ref{eq:Dist} simplifies to
\begin{equation}\label{dist 3fold}
\mathscr{F}:\  0 \to T_\sF \to TX \to I_{Z/X}\otimes L_\sF \to 0,
\end{equation}
where $L_\sF=\det(TX)\otimes  K_\sF$,   such that  $K_\sF=\det(T_\sF)^\vee$ is the \emph{canonical sheaf} of $\sF$, and $Z$ is the singular scheme of $\sF$. A \emph{foliation} is a distribution $\sF$ satisfying the Frobenius condition $[ T_\sF,  T_\sF]\subset  T_\sF$. Furthermore, we say that a distribution $\sF$ is \emph{generic} if either $Z=\emptyset$ or $\dim Z=0$. 

In addition, we will also assume that $\Pic(X)=\Z$; let $\mathcal{O}_X(1)$ denote its ample generator. We can then assume that $\det(T_\sF)^*=\mathcal{O}_X(-f)$, where $f:=c_1(T_\sF)\in\Z$. We will abuse notation by assuming that the first Chern class of a sheaf is simply given by an integer number, indicating the appropriate multiple of $\mathcal{O}_X(1)$. 

The slope of a torsion free sheaf $E$ on $X$ is given by
$$ \mu_L(E) := \frac{c_1(E)}{\rk(E)} . $$
Recall that $E$ is said to be $\mu$-(semi)stable if every proper nontrivial subsheaf $F\subset E$ satisfies $\mu_L(F)<(\le)~\mu_L(E)$. When $E$ is reflexive, then $E$ is $\mu$-(semi)stable if and only if $h^0(E(p))=0$ for every $p<(\le)~c_1(E)/2$.

Note that if $TX$ is $\mu$-(semi)stable, then $c_X<(\le)~3\rho_X$: since $H^0(\Omega^1_X(\rho_X))\ne0$, there is a nontrivial monomorphism $\OX(-\rho_X)\hookrightarrow\Omega^1_X$, and $\mu$-(semi)stability implies that $-\rho_X<-c_X/3$.

In addition, if $TX$ is $\mu$-(semi)stable, then every subsheaf $F\hookrightarrow TX$ satisfies $c_1(F)<(\le)~2\rho_X$, since $c_1(F)/2<(\le)~c_X/3(\le)~\rho_X$. In other words, the inequalities in the hypotheses of Theorem \ref{StGDist} and Theorem \ref{existencia} can be replaced by the $\mu$-(semi)stability of the tangent bundle of $X$.

%%%%%%%%%%%%%%%%%%%%%%%%%%%%%%%%%%%%%%%%%%%%%%%%%
\bigskip
\noindent{\bf Proof of Theorem \ref{StGDist}.}
Under our hypotheses, we rewrite the exact sequence in display \eqref{dist 3fold} in the following manner:
\begin{equation}\label{S1}
0 \to T_\sF \longrightarrow TX  \longrightarrow  I_Z(\kappa) \to 0, 
\end{equation}
where $\kappa=c_1(TX)- c_1(T_\sF)$. Set $f=c_1(T_\sF)$. 
Since either  $Z=\emptyset$ or $\dim(Z)=0$, then $$\mathcal{E}xt^1( I_Z(d), \O)=0.$$  Therefore, dualizing the sequence in display \eqref{S1} we obtain 
\begin{equation}\label{S2}
 0 \to \O(-\kappa)   \longrightarrow  \Omega_X^1  \longrightarrow   (T_\sF)^{\vee} \to 0.
 \end{equation}
Recall that $(T_\sF)^{\vee} \simeq T_\sF(-f)$ since $T_\sF$ is a rank $2$ reflexive sheaf. Now, twisting the sequence in display \eqref{S2} by $\O(p+f)$ we get
$$ 0 \to \O(p+f-\kappa)   \longrightarrow  \Omega_X^1(p+f)  \longrightarrow   T_\sF(p)\to 0. $$
Since   $H^1(\mathcal{O}_X(t))=0$ for all $t\in \mathbb{Z}$, the induced map in cohomology 
$$ H^0(X, \Omega_X^1(p+f)) \to H^0(X,T_\sF(p)) $$
is surjective for every $p\in\Z$. Thus, if  $h^0(X,T_\sF(p))  \neq 0$, then $p+f\geq \rho_X $, i.e, $p\geq \rho_X-f$.
It follows that 
$$ p\geq \frac{f}{2}-f=- \frac{f}{2}, $$
thus $T_\sF$ is $\mu$-semistable. If $c_1(T_\sF)=f<2\rho_X$, then 
$$ p > -\frac{f}{2}, $$
Thus $T_\sF$ is $\mu$-stable.\qed

\begin{Obs}
Theorem \ref{StGDist} generalizes (Calvo-Andrade et al. 2018, Theorem 6.1), which covered the case $X=\p3$.
\end{Obs}

%%%%%%%%%%%%%%%%%%%%%%%%%%%%%%%%%%%%%%%%%%%%%%%%%
%%%%%%%%%%%%%%%%%%%%%%%%%%%%%%%%%%%%%%%%%%%%%%%%%

\section{Sub-foliations of codimension one distributions}

A sub-foliation of a codimension one distribution $\sF$ on $X$ is a codimension two distribution $\sG$ whose tangent sheaf $T_{\sG}$ is a subsheaf of $T_\sF$. Since $\dim X=3$, note that $T_{\sG}$ must be a line bundle, hence $\sG$ is given by a twisted vector field $\phi\in H^0(TX\otimes T_\sG^\vee)$ and is automatically integrable. Moreover, the morphism $T_\sG\hookrightarrow T_\sF$ can be regarded as a section $\sigma\in H^0(T_\sF\otimes T_\sG^\vee)$. Conversely, any section $\sigma\in H^0(T_\sF\otimes\mathscr{L}^\vee)$ for $\mathscr{L}\in\Pic(X)$ vanishing in codimension 2 induces a subfoliation $\sG$ of $\sF$ such that $T_\sG=\mathscr{L}$.

The quotient $N_{\sF/\sG}:=T_\sF/T_\sG$ is called the \emph{relative normal sheaf}; it satisfies the following short exact sequence
$$ 0 \to N_{\sF/\sG} \to N_{\sG} \to N_{\sF} \to 0; $$
in particular, $N_{\sF/\sG}$ is a torsion free sheaf of rank 1. In the case at hand, $N_{\sF/\sG}$ is the ideal sheaf of the (possibly empty) 1-dimensional scheme $Y:=(\sigma)_0$, the vanishing locus of the section $\sigma$, twisted by the  line bundle  $L_{\sF/\sG}:=T_\sG^\vee\otimes\det(T_\sF)$. Rewritting the previous exact sequence, we observe that the normal sheaf of $\sG$ can be described as an extension of twisted ideal sheaves:
\begin{equation}\label{ideals}
0 \to I_Y\otimes L_{\sF/\sG} \to N_\sG \to I_Z\otimes L_\sF \to 0
\end{equation}

The goal of this section is to describe the relations between $Z:=\sing(\sF)$, $W:=\sing(\sG)$ and the curve $Y$. 

The dualization of the exact sequence in display \eqref{ideals} yields a (possibly trivial) morphism
$$\xi:L_{\sF/\sG}^\vee \to \inext^1(I_Z\otimes L_\sF,\OX)\simeq\omega_C\otimes\det(T_\sF),$$
which can be regarded as a section $\xi\in H^0(\omega_C\otimes\det(T_\sF)\otimes L_{\sF/\sG})$. The kernel of $\xi$ can be written as $I_D\otimes L_{\sF/\sG}^\vee$, the twisted ideal sheaf of a curve $D\subset X$, so that $\img\xi=\mathcal{O}_D\otimes L_{\sF/\sG}^\vee$; define also $V:=\coker\xi$. We therefore obtain the following 4 short exact sequences:
\begin{gather}
0 \to L_\sF^\vee \to N_\sG^\vee \to I_D\otimes L_{\sF/\sG}^\vee \to 0 \label{i} \\
0\to \mathcal{O}_D\otimes L_{\sF/\sG}^\vee \to \omega_C\otimes\det(T_\sF) \to V \to 0 \label{ii}\\
0\to V \to \mathcal{O}_W\otimes T_{\sG}^\vee \to S \to 0 \label{iii}\\
0\to S \to \omega_Y\otimes\det(TX)\otimes L_{\sF/\sG}^\vee \to \inext^2(I_Z,\OX) \to 0. \label{iv}
\end{gather}
Here, we also used the following isomorphisms:
$$ \inext^1(I_Y\otimes L_{\sF/\sG},\OX) \simeq \omega_Y\otimes\det(TX)\otimes L_{\sF/\sG}^\vee
~~{\rm and}~~ 
\inext^1(N_\sG,\OX) \simeq \mathcal{O}_W\otimes T_\sG^\vee. $$
With these sequences in mind, we establish the following result.

\begin{Prop}\label{prop-sub}
Let $\sF$ be a codimension one distribution of on a threefold $X$, and let $\sigma\in H^0(T_\sF\otimes\mathscr{L}^\vee)$ be a section such that $Y:=(\sigma)_0$ is a (possibly empty) curve in $X$, with $\mathscr{L}\in\Pic(X)$. If $\sG$ is the sub-foliation of $\sF$ induced by $\sigma$, then $Y\subseteq\sing_1(\sG)$, with equality holding if and only if $\dim\coker\xi=0$. In particular, if $\sing_1(\sF)$ is nonempty, irreducible and reduced, then either $Y=\sing_1(\sG)$ or $\sing(\sG)=Y\cup\sing_1(\sF)$.
\end{Prop}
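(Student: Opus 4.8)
The plan is to read the scheme $\sing_1(\sG)$ directly off the sheaf $\inext^1(N_\sG,\OX)\simeq\mathcal{O}_W\otimes T_\sG^\vee$, whose associated pure $1$-dimensional subscheme is exactly $\sing_1(\sG)$, and then to decompose its support by tracking the four exact sequences \eqref{i}--\eqref{iv}. For the inclusion $Y\subseteq\sing_1(\sG)$ I would first argue directly: the section $\sigma$ induces the twisted vector field $\phi\colon\mathscr{L}\xrightarrow{\sigma}T_\sF\to TX$ defining $\sG$, and since $\phi$ vanishes wherever $\sigma$ does, the ideal of the zero scheme $W=(\phi)_0$ is contained in that of $Y=(\sigma)_0$. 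Hence $Y\subseteq W$ scheme-theoretically, and as $Y$ is a pure $1$-dimensional curve it lies in the $1$-dimensional part $\sing_1(\sG)$ of $W$.

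To locate the remaining part of $\sing_1(\sG)$, I would follow the supports through the sequences. From \eqref{iv}, $S$ injects into $\omega_Y\otimes\det(TX)\otimes L_{\sF/\sG}^\vee$ with cokernel $\inext^2(I_Z,\OX)$, which is $0$-dimensional (it equals $\inext^3(\OX/I_Z,\OX)$ and is supported on $\sing_0(\sF)$); hence the $1$-dimensional part of $\supp S$ is exactly $Y$. From \eqref{iii}, $\mathcal{O}_W\otimes T_\sG^\vee$ is an extension of $S$ by $V=\coker\xi$, so $\supp(\mathcal{O}_W\otimes T_\sG^\vee)=\supp V\cup\supp S$; and from \eqref{ii}, $V$ is a quotient of $\omega_C\otimes\det(T_\sF)$, whence $\supp V\subseteq C=\sing_1(\sF)$. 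Combining these three observations, the $1$-dimensional part of $W$ consists of $Y$ together with the $1$-dimensional part of $\supp V$. Consequently $\sing_1(\sG)=Y$ precisely when $V$ carries no $1$-dimensional support, i.e.\ when $\dim\coker\xi=0$, which is the claimed equivalence.

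For the final assertion I would specialize to $C=\sing_1(\sF)$ nonempty, irreducible and reduced. Since $\supp V\subseteq C$, irreducibility forces $\supp V$ to be either $0$-dimensional or all of $C$. In the first case the previous step gives $Y=\sing_1(\sG)$; in the second, the $1$-dimensional part of $W$ is $Y\cup C$, and since the only further contribution to $\supp(\mathcal{O}_W\otimes T_\sG^\vee)$ is $0$-dimensional, one obtains $\sing(\sG)=Y\cup\sing_1(\sF)$ as sets. This is exactly the stated dichotomy.

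The step I expect to be the main obstacle is the backward implication of the equivalence, handled at the level of scheme structure rather than mere support. A priori a $1$-dimensional component of $\supp V\subseteq C$ could lie inside $Y$, so that $\dim\coker\xi=1$ without set-theoretically enlarging $\sing_1(\sG)$ beyond $Y$; making the equivalence precise therefore requires arguing that a nonzero $1$-dimensional $V$ genuinely thickens the pure $1$-dimensional quotient of $\mathcal{O}_W\otimes T_\sG^\vee$ via the extension \eqref{iii}. This is also the reason the clean conclusion in the corollary is stated only under the irreducibility and reducedness of $\sing_1(\sF)$, where $\supp V$ is forced to be all of $C$. A secondary technical point, which must be checked to license all of the above, is the correct identification of the connecting homomorphisms and the $\inext$-sheaves underlying \eqref{i}--\eqref{iv}, so that the support and dimension computations are rigorous.
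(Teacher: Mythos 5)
Your argument follows essentially the same route as the paper's proof: it reads $\sing(\sG)$ off $\inext^1(N_\sG,\OX)\simeq\mathcal{O}_W\otimes T_\sG^\vee$ and tracks supports through the exact sequences \eqref{i}--\eqref{iv}, concluding that the $1$-dimensional part of $W$ is $Y$ together with the $1$-dimensional part of $\supp V$, and then uses irreducibility and reducedness of $C$ to force the dichotomy $\dim V=0$ versus $\supp V=C$. The subtlety you flag in the backward implication (a $1$-dimensional component of $\supp V$ lying set-theoretically inside $Y$) is a fair point, but the paper's own proof passes over it just as quickly, so your proposal is at least as complete as the original.
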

\begin{proof}
Since $\dim\inext^2(I_Z,\OX)=0$, the sequence in display \eqref{iv} implies that the support of the sheaf $S$ is precisely $Y$, hence $Y\subset W=\sing(\sG)$ by the sequence in display \eqref{iv}. This sequence also tells us that $Y$ coincides with the 1-dimensional component of $W$ if and only if $\dim V=0$.

If $\sing_1(\sF)$ is is nonempty and irreducible, then the sequence in display \eqref{ii} implies that either $D=C$ (if $\xi\ne0$) or $D=\emptyset$ (if $\xi=0$). In the first case, $\dim V=0$, and we conclude that $Y=\sing_1(\sG)$; in the second case, we have that $V\simeq\omega_C\otimes\det(T_\sF)$, and the sequence in display \eqref{iii} tells us that $W=C\cup Y$.
\end{proof}

\begin{Obs}
Now assume that $C$ is nonempty, connected and reduced. If $\omega_C\otimes\det(T_\sF)\otimes L_{\sF/\sG}\simeq\mathcal{O}_C$, then $\xi\in H^0(\mathcal{O}_C)$ is either zero or nowhere vanishing. In the first case, we again have that $D=\emptyset$, thus $\sing_1(\sG)=Y\cup\sing_1(\sF)$. If the second possibility occurs, then $V=0$ and again we conclude that $Y=\sing_1(\sG)$. \end{Obs}

%\begin{Prop}
%If $Y=\emptyset$, then $\sing(\sG)\subseteq\sing(\sF)$. If, in addition $\sing_1(\sF)$ is %irreducible, then either $\sing_1(\sG)=\sing_1(\sF)$ and $\sing_0(\sG)=\emptyset$ or %$\sing_1(\sG)=\emptyset$.
%\end{Prop}

%\begin{proof}
%If $Y=\emptyset$, then the exact sequence in display \eqref{ideals} simplifies to
%$$ 0 \to L_{\sF/\sG} \to N_\sG \to I_Z\otimes L_\sF \to 0. $$
%Dualizing it, we obtain
%$$ 0 \to L_\sF^\vee \to N_\sG^\vee \to L_{\sF/\sG}^\vee \stackrel{\xi}{\to} %\omega_C\otimes\det(T_\sF) \to \mathcal{O}_W\otimes T_\sG^\vee \to 0,$$
%since
% \inext^1(I_Z\otimes K_\sF,\OX)\simeq \inext^1(I_Z\,\omega_X)\otimes\omega_X^\vee\otimes K_\sF^\vee \simeq \omega_C\otimes\det(T_\sF)
%$$ \inext^1(I_Z\otimes K_\sF,\OX)\simeq \omega_C\otimes\det(T_\sF)
%~~{\rm and}~~
%\inext^1(N_\sG,\OX) \simeq \mathcal{O}_W\otimes T_\sG^\vee. $$
%Here, $\omega_C$ denotes the dualizing sheaf of the curve $C:=\sing_1(\sF)$. It clearly follows that $W\subseteq C$. 

%Next, assume that $C$ is irreducible. There are two possibilities:
%\begin{itemize}
%\item[(i)] if $\xi=0$, then it is easy to see that $W=C$; this can only occur when $\omega_C\simeq %\mathcal{O}_C\otimes\det(T_\sF)^\vee\otimes T_\sG^\vee$.
%\item[(i)] if $\xi\ne0$, then $W$ corresponds to the zero locus of $\xi$ regarded as a section in %$H^0(\omega_C\otimes\det(T_\sF)\otimes L_{\sF/\sG})$.
%\end{itemize}
%\end{proof}

Theorem \ref{mthm-sub} can now be obtained by taking $\sF$ to be a generic codimension one distribution, so that $C=\emptyset$. It follows that the sequence in display \eqref{i} simplifies to
\begin{equation} \label{i'}
0 \to L_\sF^\vee \to N_\sG^\vee \to L_{\sF/\sG}^\vee \to 0;
\end{equation}
In particular, the conormal sheaf $N_\sG^\vee$ is locally free, so $\sing_0(\sG)=\emptyset$. Morever, we have that $V=0$, thus $S\simeq\mathcal{O}_W\otimes\mathscr{L}^\vee$. Since $\dim\inext^2(I_Z,\OX)=0$, the sequence in display \eqref{iv} implies $W=Y$.

Note that if $H^1(L_{\sF/\sG}\otimes L_\sF^\vee)=0$, then the sequence above must be a trivial extension, thus
$$N_\sG^\vee=L_\sF^\vee \oplus L_{\sF/\sG}^\vee.$$
In other words, the subfoliation is given by the intersection of the    distribution $\sF$ with other distribution $\sH$ whose $\det(N_{\sH})=L_{\sF/\sG}$. 

We have therefore completed the proof of Theorem \ref{mthm-sub}.

\begin{Obs}
Proposition \ref{prop-sub} above generalizes (Calvo-Andrade et al. 2018, Lemma 3.6), in which we implicitly used the irreducibility and reducedness of $\sing_1(\sF)$.  
\end{Obs}

%%%%%%%%%%%%%%%%%%%%%%%%%%%%%%%%%%%%%%%%%%%%%%%%%
%%%%%%%%%%%%%%%%%%%%%%%%%%%%%%%%%%%%%%%%%%%%%%%%%

\section{Moduli spaces of rank 2 reflexive sheaves on $\p3$}

In this section we focus on the case $X=\p3$. Given a generic distribution $\sF$, we set $d:=2-c_1(T_\sF)\ge0$, which is called the \emph{degree} of $\sF$.

Theorem \ref{StGDist} implies that $T_\sF$ is a stable rank 2 reflexive sheaf for every $d\ge0$; its second and third Chern classes in terms of the degree $d$ are given by
\begin{align*}
c_2(T_\sF)&= d^2+2, ~~{\rm and} \\
c_3(T_\sF)&= d^3+2d^2+2d=h^0(\mathcal{O}_Z),
\end{align*}
see  (Calvo-Andrade et   al. 2018, equations (18) and (19)).

Our goal is to show that the moduli space space of stable rank 2 reflexive sheaves on $\p3$ with Chern classes given by
$$ (c_1,c_2,c_3)=(2-d,d^2+2,d^3+2d^2+2d), $$
that is, equal to those of the tangent sheaf of a generic distribution, contains a nonsingular irreducible component of dimension $h^0(\Omega_{\p3}^1(d+2))-1$ whose points are sheaves $F$ given by an exact sequence of the form
\begin{equation}\label{sqc-f}
0 \to \op3(-2d) \stackrel{\sigma}{\rightarrow} \Omega^1_{\p3}(2-d) \to F \to 0.
\end{equation}
We will denote such irreducible component simply by $\mathcal{R}(d)$. Note that the exact sequence in display \eqref{sqc-f} is exactly the same as the one in display \eqref{S2} rewritten in terms of the degree $d$; dualizing \eqref{sqc-f} yields precisely the sequence in display \eqref{dist 3fold} up to a twist by $\op3(2-d)$, with $Z$ being the singular locus of the sheaf $F$, that is $\mathcal{O}_Z=\inext^1(F,\op3)$.

The strategy for the proof of Theorem \ref{main2} is as follows. The family of sheaves of given by the exact sequence in display \eqref{sqc-f} has dimension $h^0(\Omega_{\p3}^1(d+2))-1$, since each $F$ is defined by a section $\sigma\in H^0(\Omega_{\p3}^1(d+2))$ up to a scalar multiple, so we must argue that 
$$ \dim\mathcal{R}(d) = \dim\Ext^1(F,F) = h^0(\Omega_{\p3}^1(d+2))-1 . $$
Invoking (Hartshorne 1980, Proposition 3.4), we have that
$$ \dim\Ext^1(F,F) - \dim\Ext^2(F,F) = 8c_2(F)-2c_1(F)^2-3=6d^2+8d+5, $$
since $F$ is stable, see (Hartshorne 1980, Remark 3.4.1). We must therefore compute the dimension of $\Ext^2(F,F)$, showing that
\begin{equation} \label{key}
\dim\Ext^2(F,F) = h^0(\Omega_{\p3}^1(d+2))-6d^2-8d-6=\frac{1}{2}d(d-1)(d-3).
\end{equation}
We will show that the previous equality holds whenever $d\ne2$.

The first step is the following lemma, whose proof of a straight forward calculation using the exact sequence in cohomology derived from the exact sequence \eqref{sqc-f}.

\begin{Lemma}\label{cohomology}
If a sheaf $F$ satisfies the exact sequence in display \eqref{sqc-f}, then:
\begin{enumerate}
\item $h^0(F(p))=0$ for $p\le d-1$;
\item $h^1(F(p))=0$ for $p\ne d-2$, and $h^1(F(d-2))=1$;
\item $h^2(F(p))=h^0(\op3(2d-p-4))$ for $p\ge d-4$; in particular, $h^2(F(p))=0$ for $p\ge 2d-3$;
\item $h^3(F(p))=0$ for $p\ge d-4$.
\end{enumerate}
\end{Lemma}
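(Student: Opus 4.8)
The plan is to compute all cohomology groups $h^i(F(p))$ by feeding the twisted short exact sequence
\begin{equation*}
0 \to \op3(-2d+p) \to \Omega^1_{\p3}(2-d+p) \to F(p) \to 0
\end{equation*}
into the long exact sequence in cohomology and reading off the answer from the known cohomology of line bundles on $\p3$ and of the cotangent bundle $\Omega^1_{\p3}$. The essential input is the cohomology of $\Omega^1_{\p3}(k)$, which is standard (via the Euler sequence or Bott's formula): $\Omega^1_{\p3}$ has $h^1(\Omega^1_{\p3})=1$ as its only nonvanishing cohomology, $H^0(\Omega^1_{\p3}(k))$ vanishes for $k\le 1$ and equals $h^0(\op3(k-2))\cdot(\text{appropriate rank})$ computed from the Euler sequence for $k\ge 2$, and by Serre duality $H^3(\Omega^1_{\p3}(k))\cong H^0(\Omega^1_{\p3}(-k))^\vee$ vanishes for $k$ large. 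So the first step I would carry out is to record these facts about $\Omega^1_{\p3}(k)$ precisely, since everything downstream is bookkeeping against them.

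Next I would go term by term through the four items. For item (3), the high-degree statement, I use that $H^2$ and $H^3$ of $\Omega^1_{\p3}(m)$ vanish in the relevant range, so the segment
\begin{equation*}
H^2(\Omega^1_{\p3}(2-d+p)) \to H^2(F(p)) \to H^3(\op3(-2d+p)) \to H^3(\Omega^1_{\p3}(2-d+p))
\end{equation*}
collapses to give $H^2(F(p))\cong H^3(\op3(-2d+p))$, and Serre duality on $\p3$ identifies $H^3(\op3(-2d+p))$ with $H^0(\op3(2d-p-4))^\vee$, which is exactly the claimed $h^0(\op3(2d-p-4))$; this vanishes once $2d-p-4<0$, i.e. $p\ge 2d-3$. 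Item (4) follows similarly: $H^3(F(p))$ sits between $H^3(\Omega^1_{\p3}(2-d+p))$ and $H^3(\op3)$-type terms that vanish in the range $p\ge d-4$ by Serre duality. For item (1), the vanishing $h^0(F(p))=0$ for $p\le d-1$ comes from the injection $H^0(F(p))\hookrightarrow$ cokernel data together with $H^0(\Omega^1_{\p3}(2-d+p))=0$ when $2-d+p\le 1$, i.e. $p\le d-1$, after checking that the connecting map from $H^1(\op3(-2d+p))=0$ contributes nothing.

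The delicate item is (2), locating the single jump $h^1(F(d-2))=1$. Here the relevant segment is
\begin{equation*}
H^1(\op3(-2d+p)) \to H^1(\Omega^1_{\p3}(2-d+p)) \to H^1(F(p)) \to H^2(\op3(-2d+p)),
\end{equation*}
and since $H^1(\op3(k))=H^2(\op3(k))=0$ for all $k$ on $\p3$, this gives $H^1(F(p))\cong H^1(\Omega^1_{\p3}(2-d+p))$. The only nonvanishing $H^1$ of a twist of $\Omega^1_{\p3}$ occurs at twist $0$, namely $h^1(\Omega^1_{\p3})=1$, which forces $2-d+p=0$, i.e. $p=d-2$, and gives value $1$ there and $0$ elsewhere. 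I expect the main obstacle to be exactly this item: one must invoke the precise Bott vanishing for $\Omega^1_{\p3}(k)$ to be sure that $h^1$ is nonzero \emph{only} at $k=0$, and one must double-check that no stray connecting homomorphism spoils the isomorphism $H^1(F(p))\cong H^1(\Omega^1_{\p3}(2-d+p))$. The remaining items are genuinely routine once the line-bundle and cotangent-bundle cohomology tables are in hand, so the proof is indeed "a straightforward calculation," with the content concentrated in citing Bott's formula correctly.
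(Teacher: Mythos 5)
Your proposal is correct and is exactly the "straightforward calculation" the paper has in mind: twist the defining sequence \eqref{sqc-f}, take the long exact cohomology sequence, and read off each item from Bott's formula for $\Omega^1_{\p3}(k)$ together with Serre duality for line bundles (in particular, the restriction $p\ge d-4$ in items (3) and (4) is precisely what makes $H^3(\Omega^1_{\p3}(2-d+p))$ vanish, as you note). The paper omits the details entirely, so your write-up supplies them along the intended route.
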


Applying the functor $\Hom(\cdot,F)$ to the exact sequence in display \eqref{sqc-f}, we obtain
\begin{equation}\label{isom1}
\Ext^2(F,F) \simeq \Ext^2(\Omega^1_{\p3}(2-d),F) = H^2(F\otimes \tp3(d-2)),
\end{equation}
since $H^1(F(2d))=H^2(F(2d))=0$ by Lemma \ref{cohomology}.

Next, we twist the exact sequence in display \eqref{sqc-f} by $\tp3 (d-2)$ and pass to cohomology, obtaining the isomorphism
\begin{equation}\label{isom2}
H^1(F\otimes \tp3(d-2)) \simeq H^2(\tp3(-d-2))
\end{equation}
since $H^1(\Omega^1_{\p3}\otimes \tp3)=H^2(\Omega^1_{\p3}\otimes \tp3)=0$. It follows that
$h^1(F\otimes \tp3(d-2))=0$ when $d\ne2$, and $h^1(F\otimes \tp3(-4))=1$.

Finally, we twist the Euler sequence by $F(d-2)$, obtaining the exact sequence in cohomology
$$ 0 \to H^1(F\otimes \tp3(d-2)) \to H^2(F(d-2)) \to H^2(F(d-1)^{\oplus4}) \to H^2(F\otimes \tp3(d-2)) \to 0, $$
since $h^1(F(d-1))=h^3(F(d-2))=0$ by Lemma \ref{cohomology}. Using item (3) of Lemma \ref{cohomology}, and the isomorphisms
\eqref{isom1} and \eqref{isom2}, we obtain the formula
$$ \dim\Ext^2(F,F) = \left\{ \begin{array}{l}
0, ~~ {\rm for} ~~ d\le2  \\ 4\cdot h^0(\op3(d-3)) - h^0(\op3(d-2)), ~~ {\rm for} ~~ d\ge3.
\end{array} \right. $$
A simple calculation shows that 
$$ 4\cdot h^0(\op3(d-3)) - h^0(\op3(d-2)) = \frac{1}{2}d(d-1)(d-3), $$
thus establishing the equality in display \eqref{key} when $d\ne2$.

The rationality of $\mathcal{R}(d)$ is simply the fact that it contains an open subset which is isomorphic to an open subset of $\mathbb{P}H^0(\Omega^1_{\p3}(d+2))$. We have therefore completed the proof of Theorem \ref{main2}.

\bigskip

We conclude this section pointing out two interesting facts regarding the tangent sheaves of generic codimension one distributions on $\p3$. The first one is described in the following result.

\begin{Lemma}
Let $F$ be the tangent sheaf of a generic codimension one distribution of degree $d\ge1$ on $\p3$. Then $F(d)$ is globally generated, and satisfies the exact sequence
$$ 0\to \tp3(-2)\oplus\op3(-d) \to \op3^{\oplus6} \to F(d) \to 0. $$
\end{Lemma}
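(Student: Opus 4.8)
The plan is to prove both claims—the global generation of $F(d)$ and the existence of the displayed resolution—together, by constructing the resolution explicitly and then deducing global generation from it. Recall that $F$ fits into the sequence
\begin{equation*}
0 \to \op3(-2d) \stackrel{\sigma}{\rightarrow} \Omega^1_{\p3}(2-d) \to F \to 0,
\end{equation*}
so twisting by $\op3(d)$ gives
\begin{equation*}
0 \to \op3(-d) \to \Omega^1_{\p3}(2) \to F(d) \to 0.
\end{equation*}
The first step is therefore to find a presentation of $\Omega^1_{\p3}(2)$ by a direct sum of line bundles. The natural candidate comes from the Euler sequence $0\to\Omega^1_{\p3}(2)\to\op3(1)^{\oplus4}\to\op3(2)\to0$, but $\op3(1)^{\oplus 4}$ is not the desired $\op3^{\oplus 6}$, so I would instead use the identification $\Omega^1_{\p3}(2)\simeq\tp3(-2)^\vee$ is not quite what is wanted either. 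The cleaner route is to recall that $\Omega^1_{\p3}(2)$ is globally generated with $h^0(\Omega^1_{\p3}(2))=6$, giving a surjection $\op3^{\oplus 6}\twoheadrightarrow\Omega^1_{\p3}(2)$ whose kernel one computes to be $\tp3(-2)$.

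The key intermediate claim is thus the short exact sequence
\begin{equation*}
0 \to \tp3(-2) \to \op3^{\oplus 6} \to \Omega^1_{\p3}(2) \to 0,
\end{equation*}
which I would verify by a Chern class and rank count (the kernel has rank $4$, $c_1 = -c_1(\Omega^1_{\p3}(2)) = -(-4+6)=-2$, matching $\tp3(-2)$) together with the fact that the kernel is a vector bundle on $\p3$ whose cohomology identifies it; alternatively this is a standard self-dual-type resolution that can be read off from the Koszul/Euler data. Granting this, I would assemble the two sequences into the diagram presenting $F(d)$: the map $\op3(-d)\to\Omega^1_{\p3}(2)$ lifts (after checking the relevant $H^1$ vanishing, which holds since $H^1(\tp3(-2-d))=0$) to a map $\op3(-d)\to\op3^{\oplus 6}$, and combining kernels yields the desired
\begin{equation*}
0\to \tp3(-2)\oplus\op3(-d) \to \op3^{\oplus6} \to F(d) \to 0.
\end{equation*}
Concretely, one takes the pullback/horseshoe construction: $F(d)$ is the cokernel of $\op3(-d)\hookrightarrow\Omega^1_{\p3}(2)$, and substituting the resolution of $\Omega^1_{\p3}(2)$ produces a two-term complex resolving $F(d)$ after cancelling the common $\op3(-d)$ appropriately.

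Finally, global generation of $F(d)$ is immediate from the surjection $\op3^{\oplus 6}\twoheadrightarrow F(d)$ in the resolution just obtained, since a quotient of a trivial bundle (up to the identification that $\op3$ is globally generated) is globally generated. The main obstacle I anticipate is establishing the resolution $0\to\tp3(-2)\to\op3^{\oplus6}\to\Omega^1_{\p3}(2)\to0$ and, more delicately, verifying that the lift of the defining section $\sigma$ to $\op3^{\oplus6}$ can be chosen so that the two kernels combine into a direct sum $\tp3(-2)\oplus\op3(-d)$ rather than a nontrivial extension; this amounts to a vanishing statement for the relevant $\Ext^1$ (equivalently $H^1$ of a twist of $\tp3$), which I would confirm using Bott's formula for the cohomology of twisted differentials on $\p3$ for $d\ge 1$.
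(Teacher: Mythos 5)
Your proposal is correct and follows essentially the same route as the paper: both rest on the standard resolution $0\to \tp3(-2)\to\op3^{\oplus6}\to\Omega^1_{\p3}(2)\to0$, obtain the surjection $\op3^{\oplus6}\twoheadrightarrow F(d)$ for global generation, and use the vanishing of $\Ext^1(\op3(-d),\tp3(-2))=H^1(\tp3(d-2))$ to see that the kernel splits as $\tp3(-2)\oplus\op3(-d)$ (the paper splits the extension a posteriori, you lift the section a priori -- these are the same vanishing). One small slip: the relevant group is $H^1(\tp3(d-2))$, not $H^1(\tp3(-2-d))$ as you wrote, though both vanish by the Euler sequence, so the argument is unaffected.
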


Again, the case $d=0$ is well-known; as we mentioned in the Introduction, $F=N(1)$ for a null correlation bundle $N$, and it satisfies the well-known exact sequence 
$$ 0\to \tp3(-2) \to \op3^{\oplus5} \to F \to 0. $$
In addition, the case $d=1$ is also considered by Chang, see the proof of Theorem 3.14 in \cite{Chang}.

\begin{proof}
Our starting point is the exact sequence
$$ 0\to \tp3(-2) \to \op3^{\oplus6}\to \Omega^1_{\p3}(2) \to 0. $$
Composing the epimorphisms $\op3^{\oplus6}\twoheadrightarrow \Omega^1_{\p3}(2)$ and $ \Omega^1_{\p3}(2)\twoheadrightarrow F(d)$, we obtain the epimorphism $\varphi:\op3^{\oplus6}\twoheadrightarrow F(d)$, showing that $F(d)$ is globally generated. Notice that $h^0(F(d))=h^0(\Omega^1_{\p3}(2))=6$.

Moreover, a diagram chase shows that $\ker\varphi$ is an extension of $\op3(-d)$ by $\tp3(-2)$; however, $$\Ext^1(\op3(-d),\tp3(-2))=H^1(\tp3(d-2))=0$$ for every $d\ge0$, thus $\ker\varphi=\tp3(-2)\oplus\op3(-d)$, as desired.
\end{proof}

\begin{Obs}\rm
When $d=2$, we can still conclude  that the sheaves $F$ given by
$$ 0\to \op3(-4) \to \Omega^1_{\p3} \to F \to 0 $$
are smooth points of the moduli space of stable rank 2 reflexive sheaves with Chern classes $(c_1,c_2,c_3)=(0,6,20)$ within an irreducible component of dimension 45, since $\Ext^2(F,F)=0$. However, these sheaves only form a family of dimension 44 within this irreducible component.\qed
\end{Obs}

%\bigskip
%\textcolor{red}{
%... it would be interesting to characterize the general element of the %irreducible component containing the tangent sheaves of generic codimension 1 %distributions of degree 2 ...}
%\bigskip

The next result show us that for each generic distributions there exist a family of   smooth connected curves passing through  all its singular points. 
\begin{Prop}
For each generic codimension one distribution $\sF$ of degree $d\ge 1$ on $\p3$, there is a family of  smooth connected curves  of degree $d^2+2d+2$  and arithmetic genus $(d-1)(d^2+2d+2)+1$ passing  through  the $d\cdot(d^2+2d+2)$ singular points of $\sF$.
\end{Prop}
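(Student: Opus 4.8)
My plan is to produce these curves as the zero loci of sections of the tangent sheaf $T_\sF$, exploiting the analysis of subfoliations developed in Section 3. The key idea is that a generic section $\sigma\in H^0(T_\sF\otimes\mathscr{L}^\vee)$ vanishing in codimension two has zero locus $Y$ which, by Theorem \ref{mthm-sub}, equals $\sing_1(\sG)$ for the induced subfoliation $\sG$, and which necessarily passes through $\sing(\sF)=Z$ because the normal sheaf of $\sG$ is built as an extension involving $I_Z\otimes L_\sF$ (sequence in display \eqref{ideals}). Concretely, I would choose $\mathscr{L}=\op3$, so that $\sigma\in H^0(T_\sF)=H^0(\Omega^1_{\p3}(2-d)\,\,?)$—more precisely I would work with an appropriate twist ensuring $Y$ is a curve rather than empty or a divisor—and interpret $Y$ as the degeneracy/zero scheme of the section.

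First I would identify the correct twist $\mathscr{L}$. Since $T_\sF$ is a rank two reflexive sheaf, the zero locus of a section $\sigma\in H^0(T_\sF\otimes\mathscr{L}^\vee)$ that vanishes in the expected codimension two is a curve $Y$ whose degree and genus are computed from the Chern classes of $T_\sF\otimes\mathscr{L}^\vee$ by the standard formulas for the second and third Chern classes of a rank two reflexive sheaf with a section. Recalling from Section 4 that $c_1(T_\sF)=2-d$, $c_2(T_\sF)=d^2+2$, and $c_3(T_\sF)=d^3+2d^2+2d$, I would compute $\deg Y=c_2(T_\sF\otimes\mathscr{L}^\vee)$ and match it to the target value $d^2+2d+2$, which fixes the twist; I expect $\mathscr{L}=\op3(-1)$ (so that $c_1$ is shifted by $2$, giving the degree $c_2(T_\sF(1))=c_2(T_\sF)+c_1(T_\sF)+1=d^2+2+(2-d)+\cdots$) to be the right choice, to be confirmed by the explicit Chern class bookkeeping. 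The arithmetic genus then follows from the adjunction-type formula $2p_a(Y)-2=\deg(Y)\cdot(c_1(T_\sF\otimes\mathscr{L}^\vee)+K_{\p3}) $ together with $c_3$, giving the asserted genus $(d-1)(d^2+2d+2)+1$.

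Next I would verify the geometric properties claimed: that $Y$ passes through all $d\cdot(d^2+2d+2)=c_3(T_\sF)$ singular points of $\sF$, and that a generic member of the family is smooth and connected. The incidence with $\sing(\sF)$ is the structural content of the extension \eqref{ideals}: the section $\sigma$ induces $\sG$ whose singular locus contains $Z$, so $Z\subset Y$ as schemes; here I must check that the number $h^0(\mathcal{O}_Z)=c_3(T_\sF)=d\cdot(d^2+2d+2)$ matches the stated count of singular points, which is immediate from the Chern class formula in Section 4. For smoothness and connectedness of the generic $Y$, I would invoke a Bertini-type argument: since $T_\sF(1)$ (or the correct twist) is globally generated for $d\ge1$ by the preceding Lemma, a general section vanishes along a smooth curve, and connectedness follows from the vanishing $H^1$ of the relevant twisted line bundle, exactly as in the trivial-extension argument completing Theorem \ref{mthm-sub}.

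The main obstacle I anticipate is pinning down the correct twist $\mathscr{L}$ and then carrying out the Chern class computation cleanly enough that the degree and genus come out to the stated closed forms; the Chern class of a tensor of a rank two sheaf with a line bundle and the resulting $c_2$, $c_3$ need careful tracking, and the genus formula for the zero scheme of a section of a reflexive (non-locally-free) sheaf requires attention at the singular points of $T_\sF$. The global generation input from the previous Lemma is precisely what makes the Bertini argument work, so the nontrivial analytic content is already in place; what remains is bookkeeping plus one clean application of generic smoothness.
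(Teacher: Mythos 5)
Your overall strategy---realize the curves as zero loci of sections of a twist of $T_\sF$, read off the degree from $c_2$ and the genus from Hartshorne's $c_3$ formula, and get the incidence with $\sing(\sF)$ from the structure of the extension---is the same as the paper's, and the incidence and genus parts would go through (the correct twist is $\mathscr{L}=\op3(-d)$, i.e.\ sections of $T_\sF(d)$, since $c_2(T_\sF(t))=c_2+tc_1+t^2$ equals $d^2+2d+2$ at $t=d$, not at $t=1$ as you guessed; this also matches $h^0(T_\sF(d-1))=0$, so every nonzero section of $T_\sF(d)$ vanishes in codimension $2$). The genuine gap is in the last step. Every curve in this family is forced to pass through the $c_3(T_\sF)$ points where $T_\sF$ fails to be locally free, because $\inext^1(T_\sF,\op3)$ is supported on the zero scheme of \emph{any} section. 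Consequently a Bertini argument based solely on the global generation of the reflexive sheaf $T_\sF(d)$ only yields smoothness of the generic zero locus \emph{away} from $\sing(T_\sF)$; at those unavoidable base points genericity buys you nothing, and this is exactly where smoothness is in doubt. Your closing sentence defers this to ``one clean application of generic smoothness,'' but no such application is available for a non-locally-free sheaf at its singular points, and your connectedness argument (an $H^1$ vanishing ``as in the trivial-extension argument'') is not a proof either.

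The paper closes this gap with a specific trick you would need to supply: since $H^0(T_\sF(d))\simeq H^0(\Omega^1_{\p3}(2))$ via the defining sequence $0\to\op3(-d)\to\Omega^1_{\p3}(2)\to T_\sF(d)\to0$, a generic section lifts to a morphism $\op3\to\Omega^1_{\p3}(2)$ whose cokernel is a twisted null correlation bundle $N(1)$, and the resulting sequence $0\to\op3(-d)\to N(1)\to I_C(d+2)\to0$ exhibits the same curve $C$ as the zero locus of a section of the \emph{locally free}, globally generated bundle $N(d+1)$ with $H^1(N(-d-1))=0$. Only then does the classical criterion (Hartshorne 1978, Proposition 1.4) give smoothness and connectedness of the generic member. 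Without some replacement for this step---re-embedding $C$ as the zero scheme of a section of an honest vector bundle, or a direct local analysis at the singular points of $T_\sF$---your proof of smoothness and connectedness does not go through.
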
 
\begin{proof}
Let $F$  be  the tangent sheaf of a generic codimension one distribution degree $d\ge1$ on $\p3$. Since $h^0(F(d-1))=0$, the zero locus of an arbitrary section $\sigma\in H^0(F(d))$ is a curve $C$ of degree $c_2(F(d))=d^2+2d+2$ containing the singular points of $F$, which coincides with the singular points of the distributions. On the one hand, we have that
\begin{equation}\label{c31}
c_3(F(d))= c_3(F)=d(d^2+2d+2)
\end{equation}
On the other hand, from (Hartshorne 1980, Theorem 4.1)  we obtain 
\begin{equation}\label{c32}
c_3(F(d))= 2p_a(C)-2+ c_2(F(d))(4-c_1(F(d)))=2p_a(C)-2 +(d^2+2d+2)(2-d) . 
\end{equation}
By using (\ref{c31}) and  (\ref{c32}) we conclude that
$$
p_a(C)=(d-1)(d^2+2d+2)+1.
$$

Since $d\ge1$, it follows from the exact sequence 
$$
0 \to \op3(-d)  \rightarrow \Omega^1_{\p3}(2) \to F(d) \to 0
$$
that  $H^0(F(d))\simeq H^0(\Omega^1_{\p3}(2))$. 
Thus a generic section $\sigma\in H^0(F(d))$ lifts to a morphism $$\op3\to\Omega^1_{\p3}(2)$$  whose cokernel is $N(1)$, a twisted null correlation bundle. Furthermore, we also obtain the exact sequence
$$ 0\to \op3(-d) \to N(1) \to I_C(d+2)\to 0, $$
thus $C$ is also the zero locus of a generic section of $N(d+1)$. Since the latter is globally generated and $H^1(N(1+d)^\vee)=H^1(N(-d-1))=0$ for $d\ge1$, we can invoke (Hartshorne 1978, Proposition 1.4)  to conclude that $C$ is smooth and connected, as desired. Moreover, observe that by construction  these curves corresponds to a family  of dimension equal to $h^0(F(d))-1= h^0(\Omega^1_{\p3}(2))-1=5.$ 
\end{proof}

%%%%%%%%%%%%%%%%%%%%%%%%%%%%%%%%%%%%%%%%%%%%%%%%%
%%%%%%%%%%%%%%%%%%%%%%%%%%%%%%%%%%%%%%%%%%%%%%%%%

\section{Existence of Rank 2 reflexive sheaves  threefolds} \label{cy3f}

This section is dedicated to the study of the existence of rank two reflexive sheaves on threefolds with prescribed Chern classes, closing with the proof of Theorem \ref{existencia}.

To be precise, fix a smooth projective variety $X$ of dimension 3, and consider the following set:
\begin{equation}\label{tilde-s}
\widetilde{S_r}(X):=\left\{ (C,D,S)\in\bigoplus_{i=1}^3 H^{2i}(X,\Z) ~  \right. \left| 
\begin{array}{l}
\textrm{there is a } \mu\textrm{-stable reflexive sheaf of rank } r  \\
\textrm{ with } (c_1(F),c_2(F),c_3(F))=(C,D,S)
\end{array} \right \}.
\end{equation}
The Picard group of $X$ acts on $\widetilde{S_r}(X)$ in the following way:
\begin{align*}
\Pic(X) \times \widetilde{S_r}(X) & \to \widetilde{S_r}(X) \\
L\cdot(R,D,S) & \mapsto (R+c_1(L),D+c_1(L)\cdot R + c_1(L)^2,S),
\end{align*}
and we defined the quotient set $S_r(X):=\widetilde{S_r}(X)/\Pic(X)$, which we call the \emph{rank $r$ stable spectrum of $X$}.

Determining the stable spectrum of a given threefold is not an easy task, and only the case $X=\p3$ with $r=2$ is completely understood, see (Mir\'o-Roig  1985). Some progress was made by the third named author when $X$ is a hypersurface in $\p4$, see  (Jardim 2007).

More recently, this issue became of considerable interest within the context of String Theory for the case when $X$ is a Calabi--Yau manifold, see for instance the articles (Andreas $\&$ Curio 2011, Gao et al 2015, Wu $\&$ Yau 2014). For instance, it follows from (Wu $\&$ Yau 2014, Theorem 2)
 that $(0,mH^2,2mH^3)\in S_k(X)$ for $m\geq k$, for some very  ample divisor $H$. In addition, a conjecture by Douglas, Reinbacher and Yau provides an upper bound for the third Chern class of stable bundle on simply connected Calabi--Yau threefolds, see  (Douglas 2006, Conjecture 1.1).   
Letting $X$ be a smooth projective threefold satisfying the hypotheses of Theorem \ref{StGDist}, our third theorem can be regarded as providing some information on the rank 2 stable spectrum of $X$. To be precise, Theorem \ref{existencia} implies that
$$ \left( c_1( TX)-rH,c_2( TX)-rc_1(TX)\cdot H+r^2H^2,-c_3(TX(-r)) \right) \in \widetilde{S_r}(X) $$
for each $r\ge\gamma_X$, where $H=c_1(\OX(1))$.

\bigskip

\noindent{\bf Proof of Theorem \ref{existencia}.}
For each $r\ge\gamma_X$, we can apply Ottaviani's Bertini type Theorem (Ottaviani 1995, Theorem 2.8) to conclude that the degeneration locus of a generic morphism $\omega\in\Hom(TX,\OX(r))$ is either empty or 0-dimensional. Defining $F:=\ker(\omega)$, we have the exact sequence
$$ 0\to F \to TX \to I_Z(r)\to 0, $$
where $Z$ denotes the degeneration locus of $\omega$. In other words, $F$ is the tangent sheaf of a generic distribution on $X$.

Observe that $c_1(TX)-c_1(F)= r\geq\gamma_X\geq\rho_X$. By hypothesis $-c_1(TX)>-3\rho_X$, thus we have that
$$ -c_1(F) \geq \rho_X -r - c_1(F)= -c_1(TX)+\rho_X  > -2\rho_X.  $$
Theorem \ref{StGDist} then implies that $F$ is stable; its Chern classes are
\begin{itemize}
\item $c_1(F)=c_1(TX)-rc_1(\OX(1))$,
\item $c_2(F)=c_2(TX)-rc_1(\OX(1))\cdot c_1(TX)+r^2c_1(\OX(1))^2$,
\item $c_3(F)=c_3(\Omega_X^1(r))=-c_3(TX(-r))$,
\end{itemize}
see (Cavalcante et al. 2020, Theorem 5.6).

Finally, suppose that $TX$ is simple. We can take $m_0$ such that 
$H^i(X, \Omega^1_X\otimes\wedge^{i+1}TX(-ir) )=0,$
for $i=1,2$. Follows from (Araujo $\&$ Corr\^ea 2013, Theorem 1.1)  that if $$\sing(F')=\{\omega'=0\}\subset \sing(F)= \{\omega=0\},$$ then
there is $\lambda \in \mathbb{C}\simeq H^0(X, End(TX))$ such that $\omega'= \lambda \omega$. In particular, $F=\ker(\omega)=\ker(\omega')=F'$.

%%%%%%%%%%%%%%%%%%%%%%%%%%%%%%%%%%%%%%%%%%%%%%%%%
%%%%%%%%%%%%%%%%%%%%%%%%%%%%%%%%%%%%%%%%%%%%%%%%%

\section{Connectedness of non isolated singularities} \label{conn}

We now shift our attention to codimension one distributions on threefolds containing non isolated singularities. As we mentioned at the Introduction, we have the following short exact sequence
\begin{equation}\label{u-oz-oc}
0\to \mathscr{U} \to \OZ \to \OC \to 0,    
\end{equation}
where $\mathscr{U}$ is the maximal 0-dimensional subsheaf of $\OZ$ and $C=\sing_1(\sF)$. Our next result, in which we do not assume that $\Pic(X)=\Z$, describes the number of connected components of $C$.

\begin{Prop}\label{prop-conn}
Let $\sF$ be a codimension one distribution on a smooth projective threefold $X$ satisfying $h^1(\OX)=0$; let $C:=\sing_1(\sF)$. If $h^1(TX\otimes L_\sF^\vee)=h^2(TX\otimes L_\sF^\vee)=0$, then 
$$ h^0(\OC)=h^2(T_\sF\otimes L_\sF^\vee) - c_3(T_\sF) + 1.$$
\end{Prop}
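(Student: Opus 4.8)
The plan is to compute $h^0(\OC)$ by extracting it from the short exact sequence in display \eqref{u-oz-oc}, relating the relevant cohomology groups to those appearing in the defining sequence of the distribution. First I would observe that $\OC$ is the structure sheaf of a one-dimensional scheme, so the number of connected components of $C$ is exactly $h^0(\OC)$; the task is thus purely cohomological. The natural strategy is to pass to cohomology in \eqref{u-oz-oc}, which gives
\begin{equation*}
0 \to H^0(\mathscr{U}) \to H^0(\OZ) \to H^0(\OC) \to H^1(\mathscr{U}) \to \cdots,
\end{equation*}
but since $\mathscr{U}$ is a $0$-dimensional sheaf we have $H^1(\mathscr{U})=0$ and $H^0(\mathscr{U})=h^0(\mathscr{U})$, so $h^0(\OC)=h^0(\OZ)-h^0(\mathscr{U})+\big(\text{correction}\big)$. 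The cleaner route, which I would actually pursue, is to express everything through the twisted ideal sheaf $I_Z\otimes L_\sF$ appearing in the distribution sequence \eqref{dist 3fold}.

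Concretely, I would twist \eqref{dist 3fold} by $L_\sF^\vee$ to obtain
\begin{equation*}
0 \to T_\sF\otimes L_\sF^\vee \to TX\otimes L_\sF^\vee \to I_Z \to 0,
\end{equation*}
and pass to cohomology. The hypotheses $h^1(TX\otimes L_\sF^\vee)=h^2(TX\otimes L_\sF^\vee)=0$ are precisely designed to kill the middle terms, yielding an isomorphism $H^2(T_\sF\otimes L_\sF^\vee)\simeq H^1(I_Z)$ (after accounting for the relevant connecting maps). Next I would use the ideal sheaf sequence $0\to I_Z\to \OX\to \OZ\to 0$; the hypothesis $h^1(\OX)=0$ together with $h^0(\OX)=1$ gives, from the long exact sequence, the relation $h^1(I_Z)=h^0(\OZ)-1$. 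Then I would invoke the decomposition \eqref{u-oz-oc}: since $\mathscr{U}$ is supported in dimension zero, $h^0(\OZ)=h^0(\mathscr{U})+h^0(\OC)$, and $h^0(\mathscr{U})$ is exactly the length of the $0$-dimensional part of $Z$. Combining these, $h^0(\OC)=h^2(T_\sF\otimes L_\sF^\vee)+1-h^0(\mathscr{U})$, so it remains to identify $h^0(\mathscr{U})$ with $c_3(T_\sF)$.

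The final identification is the key arithmetic input: for a codimension one distribution, the length of the $0$-dimensional component of the singular scheme is computed by $c_3$ of the tangent sheaf, i.e. $h^0(\mathscr{U})=c_3(T_\sF)$. I would justify this via the standard relation between the third Chern class of a rank $2$ reflexive sheaf and the length of its singularity scheme, in the form used earlier in the paper (where for $X=\p3$ one has $c_3(T_\sF)=h^0(\mathcal{O}_Z)$ in the generic case); here, because $\mathscr{U}$ is precisely the $0$-dimensional part, it is $c_3(T_\sF)$ that measures its length even when $C$ is nonempty. Substituting gives the desired formula $h^0(\OC)=h^2(T_\sF\otimes L_\sF^\vee)-c_3(T_\sF)+1$.

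The main obstacle I anticipate is the last step, namely pinning down the precise relation $h^0(\mathscr{U})=c_3(T_\sF)$ in the presence of a nontrivial $1$-dimensional component $C$. One must be careful that $c_3(T_\sF)$ genuinely computes the length of the \emph{isolated} part $\mathscr{U}$ and not some mixture involving $C$; this requires the local analysis of $\inext$-sheaves (the sheaf $\inext^2(I_Z,\OX)$ is $0$-dimensional and its length is governed by $c_3$) that was developed in the earlier sections. I would handle this by relating $\mathscr{U}$ to $\inext^2(I_Z,\OX)$ and using the Grothendieck--Riemann--Roch type computation of $c_3(T_\sF)$ already invoked in Section 3. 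A secondary, more routine check is verifying that the connecting maps in the first cohomology sequence vanish so that the isomorphism $H^2(T_\sF\otimes L_\sF^\vee)\simeq H^1(I_Z)$ holds on the nose; this follows directly from the two vanishing hypotheses on $TX\otimes L_\sF^\vee$.
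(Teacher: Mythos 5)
Your proposal is correct and follows essentially the same route as the paper's proof: the cohomology of \eqref{u-oz-oc} and the ideal sheaf sequence, the isomorphism $H^1(\IZ)\simeq H^2(T_\sF\otimes L_\sF^\vee)$ from twisting \eqref{dist 3fold} by $L_\sF^\vee$ under the two vanishing hypotheses, and the identification $h^0(\mathscr{U})=c_3(T_\sF)$. The paper carries out that last step exactly as you sketch, via the chain $\inext^1(T_\sF,\OX)\simeq\inext^2(\IZ\otimes L_\sF,\OX)\simeq\inext^3(\mathscr{U},\OX)$ (which isolates the $0$-dimensional part $\mathscr{U}$), the local-to-global spectral sequence with Serre duality, and Hartshorne's $c_3(E)=h^0(\inext^1(E,\OX))$ for rank $2$ reflexive sheaves.
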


In particular, if $C$ is reduced, then $h^0(\OC)$ is precisely the number of connected components $C$ has, and the statement of Theorem \ref{mthm-conn} follows easily.

\begin{proof}
The sequence in display \eqref{u-oz-oc} implies that $h^0(\OC)=h^0(\OZ)-h^0(\mathscr{U})$; we analyze the terms in the right hand side of this last equality separately.

The standard exact sequence
$$ 0\to \IZ \to \OX \to \OZ \to 0 $$
implies that $h^1(\IZ)=h^0(\OZ)-h^0(\OX)=h^0(\OZ)-1$, since $h^1(\OX)=0$ by hypothesis. Twisting the sequence in display \eqref{dist 3fold} by $LK_\sF^\vee$, we obtain that
$$H^1(\IZ)\simeq H^2(T_\sF\otimes L_\sF^\vee),$$ since we assumed that $h^1(TX\otimes L_\sF^\vee)=h^2(TX\otimes L_\sF^\vee)=0$. It follows that 
$$h^0(\OZ)=h^2(T_\sF\otimes L_\sF^\vee)+1.$$
Dualizing the sequences in display \eqref{dist 3fold} and \eqref{u-oz-oc}, we conclude that
$$ \inext^1(T_\sF,\OX)\simeq\inext^2(\IZ\otimes L_\sF,\OX)\simeq 
\inext^3(\mathscr{U},\OX). $$
Note that
$$ H^0(\inext^3(\mathscr{U},\OX)) \simeq \Ext^3(\mathscr{U},\OX) \simeq H^0(U\otimes\omega_X)^*; $$
the first isomorphism follows from the spectral sequence for local to global ext's (since $\dim\mathscr{U}=0$), while the second isomorphism is given by Serre duality. It follows that  $$ h^0(\mathscr{U})=h^0(\inext^3(\mathscr{U},\OX))=h^0(\inext^1(T_\sF,\OX)). $$
However, the last dimension coincides with $c_3(T_\sF)$ ( Hartshorne 1980, Proposition 2.6)  for the proof of this fact when $X=\p3$, the general case being similar.

Gathering all the terms, we obtain the formula in the statement of the proposition. As for the second claim, just note that if $C$ is reduced, then $h^0(\OC)$ is precisely the number of connected components of $C$.
\end{proof}

Specializing to the case $X=\p3$, we obtain the following claim, which generalizes ( Calvo-Andrade et al. 2018, Theorem 3.8).

\begin{Cor}
Let $\sF$ be a codimension one distribution on $\p3$ of degree $d$, and let $C=\sing_1(\sF)$. Then
$$ h^2(T_\sF(-d-2)) - c_3(T_\sF) \le h^0(\OC) \le h^2(T_\sF(-d-2)) - c_3(T_\sF) + 1. $$
If $d\ne2$, then $h^0(\OC) = h^2(T_\sF(-d-2)) - c_3(T_\sF) + 1$.
In particular, if $C$ is reduced, then $C$ is connected if and only if $h^2(T_\sF(-d-2)) = c_3(T_\sF)$.
\end{Cor}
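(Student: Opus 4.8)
The plan is to deduce this corollary from Proposition \ref{prop-conn} by specializing its hypotheses to $X=\p3$ and then isolating the single degree where they degenerate. First I would record the relevant line bundle: for a distribution of degree $d$ on $\p3$ one has $c_1(T_\sF)=2-d$, hence $K_\sF=\op3(d-2)$ and $L_\sF=\det(T\p3)\otimes K_\sF=\op3(d+2)$. Consequently $T_\sF\otimes L_\sF^\vee=T_\sF(-d-2)$ and $TX\otimes L_\sF^\vee=T\p3(-d-2)$, so the cohomology groups appearing in Proposition \ref{prop-conn} are exactly those in the statement.

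Next I would verify the cohomological hypotheses. Since $h^1(\op3(t))=0$ for all $t$, the condition $h^1(\OX)=0$ is automatic. Twisting the Euler sequence $0\to\op3\to\op3(1)^{\oplus4}\to T\p3\to0$ by $\op3(-d-2)$ and passing to cohomology shows at once that $h^1(T\p3(-d-2))=0$ for every $d$. For the degree-two term, Serre duality gives $H^2(T\p3(-d-2))\cong H^1(\Omega^1_{\p3}(d-2))^*$, which by Bott's formula vanishes except when $d=2$, where it is one-dimensional. Thus for $d\ne2$ all hypotheses of Proposition \ref{prop-conn} are satisfied, and that proposition yields directly the equality $h^0(\OC)=h^2(T_\sF(-d-2))-c_3(T_\sF)+1$.

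The only delicate point is the degree $d=2$ case, where $h^2(T\p3(-4))=1$ and Proposition \ref{prop-conn} no longer applies verbatim. Here I would reexamine the single step of its proof that used the vanishing of $h^2(TX\otimes L_\sF^\vee)$, namely the identification $H^1(\IZ)\cong H^2(T_\sF\otimes L_\sF^\vee)$. Twisting the defining sequence \eqref{dist 3fold} by $L_\sF^\vee$ gives $0\to T_\sF(-4)\to T\p3(-4)\to\IZ\to0$, and the associated long exact sequence, together with $h^1(T\p3(-4))=0$, produces an injection $H^1(\IZ)\hookrightarrow H^2(T_\sF(-4))$ whose cokernel embeds in the one-dimensional space $H^2(T\p3(-4))$. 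This forces $h^2(T_\sF(-4))-1\le h^1(\IZ)\le h^2(T_\sF(-4))$. Combining with $h^0(\OZ)=h^1(\IZ)+1$ and the unchanged identity $h^0(\mathscr{U})=c_3(T_\sF)$ borrowed from the proof of Proposition \ref{prop-conn}, I obtain the two-sided estimate for $h^0(\OC)$ exactly as stated; since the $d\ne2$ equality trivially satisfies the same bounds, the displayed inequality holds for all $d$.

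Finally, the connectedness criterion follows formally: when $C$ is reduced, $h^0(\OC)$ counts its connected components, so $C$ is connected precisely when $h^0(\OC)=1$, which by the equality valid for $d\ne2$ is equivalent to $h^2(T_\sF(-d-2))=c_3(T_\sF)$. The main obstacle is the bookkeeping in the $d=2$ case: the loss of the isomorphism $H^1(\IZ)\cong H^2(T_\sF(-4))$ is exactly what produces the ambiguity of $1$ in the bounds, and one must check that the extra contribution cannot exceed $\dim H^2(T\p3(-4))=1$.
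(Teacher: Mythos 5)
Your proposal is correct and follows essentially the same route as the paper: apply Proposition \ref{prop-conn} directly for $d\ne2$, and for $d=2$ extract the two-sided bound on $h^1(\IZ)$ from the exact sequence $0\to H^1(\IZ)\to H^2(T_\sF(-4))\to H^2(\tp3(-4))$ together with $h^2(\tp3(-4))=1$. The only difference is that you spell out the Euler-sequence and Serre-duality computations verifying the hypotheses, which the paper leaves implicit.
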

\begin{proof}
For $d\ne2$, the claim follows from considering $X=\p3$ in Proposition \ref{prop-conn}. 
When $d=2$, we have that $h^2(TX\otimes L_\sF^\vee)=H^2(\tp3(-4))=1$, so one cannot apply Proposition \ref{prop-conn} directly. However, the cohomology sequence associated to the sequence in display \eqref{S1} ($\kappa=4$ in this case) is
$$ 0 \to H^1(\IZ) \to H^2(T_\sF(-4)) \to H^2(\tp3(-4)) \to \cdots $$
thus either $h^1(\IZ)=h^2(T_\sF(-4))$ or $h^1(\IZ)=h^2(T_\sF(-4))-1$. 
\end{proof}

\subsection*{Acknowledgments}
MC is partially supported by CNPq
grant numbers
202374/2018-1, 302075/2015-1, 400821/2016-8,  CAPES and Fapemig; he is grateful to University of Oxford for its hospitality, in special he is thankful to Nigel Hitchin for interesting mathematical conversations.
MJ is partially supported by the CNPq grant number 303332/2014-0 and by the FAPESP Thematic Project number 2018/21391-1.

%%%%%%%%%%%%%%%%%%%%%%%%%%%%%%%%%%%%%%%%%%%%%%%%%
%%%%%%%%%%%%%%%%%%%%%%%%%%%%%%%%%%%%%%%%%%%%%%%%%

\end{document}